\renewcommand{\(}{\left(}
\renewcommand{\)}{\right)}
\newtheorem{theo}{Theorem}
\newtheorem{prop}{Proposition}
\newtheorem{lemma}{Lemma}
\newtheorem{cor}{Corollary\!\!}
\newtheorem{ncor}{Corollary}
\theoremstyle{definition}
\newtheorem{df}{Definition}
\newtheorem{ex}{Example}
\theoremstyle{remark}
\newtheorem{rem}{Remark\!\!}
\newtheorem{nrem}{Remark}
\newcommand{\beq}{\begin{equation}} 
\newcommand{\eeq}{\end{equation}} 
\newcommand{\bal}{\begin{align}} 
\newcommand{\eal}{\end{align}} 
\newcommand{\bals}{\begin{align*}} 
\newcommand{\eals}{\end{align*}} 
\newcommand{\barr}[1]{\begin{array}{#1}} 
\newcommand{\earr}{\end{array}}
\newcommand{\bth}{\begin{theo}} 
\newcommand{\bl}{\begin{lemma}} 
\newcommand{\el}{\end{lemma}} 
\newcommand{\bp}{\begin{prop}} 
\newcommand{\ep}{\end{prop}} 
\newcommand{\bdf}{\begin{df}} 
\newcommand{\edf}{\end{df}} 
\newcommand{\brem}{\begin{rem}} 
\newcommand{\erem}{\end{rem}} 
\newcommand{\bnrem}{\begin{nrem}} 
\newcommand{\enrem}{\end{nrem}} 
\newcommand{\bex}{\begin{ex}} 
\newcommand{\eex}{\end{ex}} 
\newcommand{\bcor}{\begin{cor}} 
\newcommand{\ecor}{\end{cor}} 
\newcommand{\bncor}{\begin{ncor}} 
\newcommand{\encor}{\end{ncor}} 
\newcommand{\bpf}{\begin{proof}} 
\newcommand{\epf}{\end{proof}}
\def\({\left(} 
\def\){\right)}
\numberwithin{equation}{section}
\title{Universal singular exponents in catalytic variable equations}
\author{Michael Drmota$^*$, Marc Noy\textsuperscript{\dag{}} {}, and Guan-Ru Yu$^*$}
\thanks{{}$^*$ TU Wien, Institute of Discrete Mathematics and Geometry,
Wiedner Hauptstrasse 8-10, A-1040 Vienna, Austria. michael.drmota@tuwien.ac.at. Research 
supported by the
Austrian  Science Foundation FWF, project F 50-02.}
\thanks{\textsuperscript{\dag{}} Universitat Polit\`ecnica de Catalunya, 
Department of Mathematics, Pau Gargallo 14, 08028 Barcelona, Spain. marc.noy@upc.edu. Research supported in part by Ministerio de Ciencia, Innovaci\'on y Univerisades, grant MTM2017-82166-P.}
\begin{document}

\begin{abstract}
Catalytic equations appear in several combinatorial applications, most notably in the numeration of lattice path and in the enumeration  of planar maps. The main purpose of this 
paper is to show that the asymptotic estimate for  the coefficients of the solutions of (so-called) positive
catalytic equations has a universal asymptotic behavior. In particular,  this provides 
a rationale why the number of maps of size $n$ in various planar map classes  
grows asymptotically like $c\cdot  n^{-5/2} \gamma^n$, 
for suitable positive constants $c$ and $\gamma$. 
Essentially we have to distinguish between linear catalytic equations (where the subexponential growth
is $n^{-3/2}$) and non-linear catalytic equations (where we have $n^{-5/2}$ as in planar maps).
Furthermore we provide a quite general central limit theorem for parameters that can 
be encoded by catalytic functional equations, even when they are not positive.
\end{abstract}

\maketitle

\section{Introduction}
\label{sec1}

A planar map is a connected planar graph, possibly with loops and multiple edges,
together with an embedding in the plane. A map is rooted if a vertex $v$ and an edge
$e$ incident with $v$ are distinguished, and are called the root-vertex and root-edge,
respectively. The face to the right of e is called the root-face and is usually taken
as the outer face. All maps in this paper are rooted.

The enumeration of rooted maps is a classical subject, initiated by Tutte in the
1960's. Tutte (and Brown) introduced the technique now called ``the quadratic method'' in order to
compute the number $M_n$ of rooted maps with $n$ edges, proving the formula
\[
M_n = \frac{2(2n)!}{(n+2)!n!}3^n.
\]
This was later extended by Tutte and his school to several classes of planar maps:
2-connected, 3-connected, bipartite, Eulerian, triangulations, quadrangulations, etc.
Using the previous formula, Stirling's estimate gives $M_n \sim  (2/\sqrt \pi) \cdot n^{-5/2} 12^n$. 
In all cases where a ``natural'' condition 
is imposed on maps,
the asymptotic estimates turn out to be of this kind:
\begin{equation}\label{1.1}
c\cdot  n^{-5/2} \gamma^n.
\end{equation}
The constants $c$ and $\gamma$ depend on the class 
under consideration, but one gets systematically an $n^{-5/2}$ term in the estimate. 

This phenomenon is discussed by Banderier
et al. \cite{1}: `This
generic asymptotic form is ``universal'' in so far as it is valid for all known ``natural
families of maps''.'  
The goal of this paper is to provide an explanation for this universal phenomenon, based on a detailed analysis 
of  functional equations for  generating functions with a catalytic variable.

In order to motivate the
statements that follow, let us recall the basic technique for counting planar maps.
Let $M_{n,k}$ be the number of maps with $n$ edges and in which the degree of the root-face is equal $k$. Let $M(z, u) = \sum_{n,k} M_{n,k}u^kz^n$ 
be the associated generating function.
As shown by Tutte \cite{16}, $M(z,u)$ satisfies the quadratic equation
\begin{equation}\label{1.3}
M(z, u) = 1 + zu^2M(z, u)^2 + uz
\frac{ uM(z, u) -M(z, 1)}{u-1}.
\end{equation}
The variable $u$ is called a ``catalytic variable''. 

It turns out that 
\begin{equation}\label{eqMz1}
M(z,1) = \sum_{n\ge 0} M_n z^n = \frac{18z - 1 + (1 - 12z)^{3/2}}{54z^2}
 = 1 + 2z + 9z^2 + 54z^3 + \cdots ,
\end{equation}
from which we can deduce the explicit form for the numbers $M_n$. 
The remarkable observation here is the singular part $(1 - 12z)^{3/2}$
that reflects the asymptotic behavior $c\cdot n^{-5/2} 12^n$ of $M_n$.

A general approach to equations of the form (\ref{1.3}) 
was carried out by Bousquet-M\'elou and Jehanne \cite{BM-J}. First one
rewrites (\ref{1.3}) into the form
\begin{equation}\label{eqBM}
P(M(z,u),M_1(z),z,u) = 0,
\end{equation}
where $P(x_0,x_1,z,u)$ is a polynomial (or more generally an analytic function)
and $M_1(z)$ abbreviates $M(z,1)$.
Next one searches for functions  $f(z)$, $y(z)$ and  $u(z)$ with
\begin{align*}
P(f(z),y(z),z,u(z)) &= 0,\\
P_{x_0}(f(z),y(z),z,u(z)) &= 0,\\
P_u(f(z),y(z),z,u(z)) &= 0.
\end{align*}
If $y(z)$ has a power series representation at $z=0$ then one has $M_1(z) = y(z)$.

Bousquet-M\'elou and Jehanne \cite{BM-J} considered in particular equations of the form\footnote
{Actually Bousquet-M\'elou and Jehanne \cite{BM-J} considered more general functional equations
that contain also higher differences.}
\begin{equation}\label{eqBMJ}
\boxed{M(z,u) = F_0(u) + z Q\left( M(z,u), \frac{M(z,u)-M(z,0)}u, z,u \right)},
\end{equation}
where $F_0(u)$ and $Q(\alpha_0,\alpha_1,z,u)$ are polynomials, and showed that there is a unique power series solution 
$M(z,u)$ that is also an algebraic function. Actually all the examples that we will discuss can be rewritten
into (almost) this form (possibly by replacing $u$ by $u+1$). For example, for the equation (\ref{1.3})
we have 
\begin{equation}\label{fqu}
F_0(u) = 1 \quad \mbox{and} \quad Q(\alpha_0,\alpha_1,z,u) = (u+1)^2\alpha_0^2 + (u+1)\alpha_0 + (u+1)\alpha_1.
\end{equation}

In the context of this paper we  always assume that $F_0$ and $Q$ have non-negative coefficients
This is natural since  Equation (\ref{eqBMJ}) can be seen as a translation of a recursive
combinatorial description of maps or other combinatorial objects.  
This also implies that $M(z,u)$ has non-negative coefficients, since the equaytion (\ref{eqBMJ}) can
be written as an infinite system of equation for the functions $M_j(z) = [u^j]\, M(z,u)$ with non-negative
coefficients on the right hand side.

Let us consider the first case, where $Q$ is linear in $\alpha_0$ and $\alpha_1$, that is, we can write  (\ref {eqBMJ}) as 
\begin{equation}\label{eqlinear}
\boxed{M(z,u) = Q_0(z,u) + z M(z,u) Q_1(z,u) +  z\frac{M(z,u)-M(z,0)}u  Q_2(z,u)}.
\end{equation}
Here we are in the framework of the so-called {\it kernel method}.
We rewrite (\ref{eqlinear}) as
\begin{equation}\label{eqkerneleq}
M(z,u)( u - zuQ_1(z,u) - z Q_2(z,u)) = uQ_0(z,u) - z M(z,0) Q_2(z,u),
\end{equation}
where
\[
K(z,u) = u - zuQ_1(z,u) - z Q_2(z,u)
\]
is the \emph{kernel}. The idea of the kernel method is to bind $u$ and $z$ so that
$K(z,u)=0$, that is, one considers a function $u=u(z)$ such that $K(z,u(z)) =0$.
Then the left hand side of (\ref{eqkerneleq}) cancels and $M(z,0)$ can be calculated
from the right hand side by setting $u=u(z)$.

\begin{prop}\label{ProTh1}
Suppose that $Q_0$, $Q_1$, and $Q_2$ are polynomials in $z$ und $u$ with non-negative
coefficients and let $M(z,u)$ be the power series solution of (\ref{eqlinear}). 
Furthermore let $u(z)$ be the power series solution of the equation
\[
u(z) = zQ_2(z,u(z)) +  z u(z) Q_1(z,u(z)), \quad \mbox{with $u(0) = 0$.}
\]
Then $M(z,0)$ is given by
\[
M(z,0) = \frac{Q_0(z,u(z))}{1-zQ_1(z,u(z))}.
\]
\end{prop}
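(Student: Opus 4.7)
The plan is to apply the kernel method to \eqref{eqkerneleq}: produce a formal power series $u(z)$ annihilating the kernel $K(z,u)$, substitute it to kill the left-hand side, and read off $M(z,0)$ from what remains.

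For existence of $u(z)$, the defining relation $u = zQ_2(z,u) + zuQ_1(z,u)$ carries an explicit factor of $z$ on the right, so the operator $\Phi(v) := zQ_2(z,v) + zvQ_1(z,v)$ is a contraction on $z\mathbb{C}[[z]]$ in the $z$-adic topology and admits a unique fixed point $u(z)$ with $u(0) = 0$; alternatively, the implicit function theorem applies to $K(z,u)$ at $(0,0)$, since $\partial_u K(0,0) = 1 \neq 0$. The non-negativity of the coefficients of $Q_1$ and $Q_2$ then forces $u(z)$ to have non-negative coefficients as well. Because $u(z)$ has zero constant term and the $Q_i$'s are polynomials, the substitution $u \leftarrow u(z)$ in \eqref{eqkerneleq} gives an unambiguous identity in $\mathbb{C}[[z]]$.

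After this substitution the left-hand side of \eqref{eqkerneleq} vanishes and we are left with
\[
u(z)\,Q_0(z,u(z)) \;=\; z\,M(z,0)\,Q_2(z,u(z)).
\]
The kernel equation itself rearranges to $zQ_2(z,u(z)) = u(z)\bigl(1 - zQ_1(z,u(z))\bigr)$; substituting this on the right and, in the generic case $u(z) \not\equiv 0$, cancelling the common factor of $u(z)$ yields $Q_0(z,u(z)) = \bigl(1 - zQ_1(z,u(z))\bigr)\,M(z,0)$. Since $1 - zQ_1(z,u(z))$ has constant term $1$ it is invertible in $\mathbb{C}[[z]]$, and dividing produces the stated formula. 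The degenerate case $u(z) \equiv 0$ (which forces $Q_2(z,0) \equiv 0$) is handled directly by setting $u = 0$ in \eqref{eqlinear}: the $u^0$-coefficient becomes $M(z,0)\bigl(1 - zQ_1(z,0)\bigr) = Q_0(z,0)$, again matching the claimed expression.

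The only delicate step is justifying the substitution $u \leftarrow u(z)$, which amounts to routine $z$-adic continuity; everything else is algebraic juggling of the two relations $K(z,u(z)) = 0$ and \eqref{eqkerneleq}. The non-negativity hypothesis plays no role in the identity per se; it is what later guarantees that $u(z)$ is a combinatorial generating function with a positive radius of convergence, on which the singular analysis in the sequel will rest.
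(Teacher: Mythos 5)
Your proof follows the same kernel-method route as the paper: establish existence of $u(z)$ (by $z$-adic contraction or by formal recursion), substitute $u=u(z)$ into \eqref{eqkerneleq} to annihilate the left-hand side, and then use the kernel relation $zQ_2(z,u(z)) = u(z)\bigl(1 - zQ_1(z,u(z))\bigr)$ to pass from $u(z)Q_0(z,u(z)) = z\,M(z,0)\,Q_2(z,u(z))$ to the stated formula. The only difference is cosmetic: you treat the degenerate case $u(z)\equiv 0$ (equivalently $Q_2(z,0)\equiv 0$) separately before cancelling $u(z)$, whereas the paper passes through the intermediate form $M(z,0)=u(z)Q_0(z,u(z))/(zQ_2(z,u(z)))$ without remarking on that edge case; your version is a bit more careful there, but the argument is in substance identical.
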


There are three particular {\it degenerate} cases, where the solution function $M(z,0)$ is a 
a rational function (or even a polynomial). In these cases  the asymptotic analysis of $M_n$ is trivial:

\begin{itemize}
\item If $Q_0= R_0(z)$ and $Q_1= R_1(z)$  depend only on $z$ then 
\[
M(z,0) = \frac{R_0(z)}{1-zR_1(z)}.
\]
\item If $Q_1= Q_{10}(z)$  depends only on $z$ and if $Q_2 = T_0(z) + T_1(z) u$ is at most linear in $u$ then
\[
u(z) = \frac{zT_0(z)}{1 - zR_1(z) - zT_1(z)}\quad \mbox{and}\quad
M(z,0) = \frac{Q_{0}(z,u(z))}{1-zR_1(z)} 
\]
are rational functions.
\item
If $Q_2$ has $u$ as a factor then $u(z) = 0$ and we have
\[
M(z,0) = \frac{Q_0(z,0)}{1-zQ_1(z,0)}
\]
is a rational function.
\end{itemize}

In all other cases $M(z,0)$ has universally 
a dominant square root singularity.

\begin{theo}\label{Th1}
Suppose that $Q_0$, $Q_1$, and $Q_2$ are polynomials in $z$ und $u$ with non-negative coefficients such that none of the three above mentioned cases occurs.

Let $M(z,u)$ be the power series solution of (\ref{eqlinear}) and let $z_0>0$ denote
the radius of convergence of $M(z,0)$. Then the local Puiseux expansion of $M(z,0)$ around $z_0$ is 
given by 
\begin{equation}\label{eqsqrtsing}
M(z,0) = a_0+a_1(1-z/z_0)^{1/2}+a_2(1-z/z_0)+\cdots,
\end{equation}
where $a_0 > 0$ and $a_1 < 0$. Furthermore there exists $b\ge 1$, a non-empty set $J \subseteq \{0,1,\ldots,b-1\}$ 
of residue classes modulo $b$ and constantcs $c_j> 0$ for such that for $j\in J$
\begin{equation}\label{eqTh1.2}
M_n = [z^n]M(z,0) = c_j n^{-3/2} z_0^{-n}\left( 1 + O\left( \frac 1n \right) \right), \qquad (n\equiv j \bmod b,\ n\to \infty)
\end{equation}
and $M_n = 0$ for $n\equiv j \bmod b$ with $j\not \in J$.
\end{theo}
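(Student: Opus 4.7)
The plan is to use Proposition \ref{ProTh1} to reduce the analysis of $M(z,0)$ to that of the auxiliary function $u(z)$, extract a square-root singularity for $u(z)$ from the standard smooth implicit-function schema for positive systems, transfer it through the rational function $h(z,u):=Q_0(z,u)/(1-zQ_1(z,u))$, and finish with a Flajolet--Odlyzko transfer to obtain coefficient asymptotics, handling periodicity in the usual way.

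First I would analyse $u(z)$. Write $\Phi(z,u):=zQ_2(z,u)+zu\,Q_1(z,u)$, a polynomial with non-negative coefficients and $\Phi(0,0)=0$, so that $u=\Phi(z,u)$. The exclusion of the three degenerate cases translates precisely into the hypotheses of the smooth schema: excluding case~3 means $Q_2(z,0)\not\equiv 0$, so $\Phi$ has a non-trivial purely-$z$ part and hence $u(z)\not\equiv 0$; excluding case~2 means $\Phi$ is not affine in $u$ (indeed $\Phi$ is affine in $u$ exactly when $Q_1$ depends only on $z$ and $Q_2$ is at most linear in $u$), so $\Phi_{uu}\not\equiv 0$. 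Standard arguments then yield a unique positive pair $(\rho,\tau)$ with $\tau=\Phi(\rho,\tau)$ and $\Phi_u(\rho,\tau)=1$; at this point $\Phi_z(\rho,\tau)>0$ and $\Phi_{uu}(\rho,\tau)>0$ by positivity and non-degeneracy, and the classical singular implicit-function argument applied to $u-\Phi(z,u)=0$ produces the Puiseux expansion
\[
u(z)\;=\;\tau - c_1\sqrt{1-z/\rho} + c_2(1-z/\rho)+\cdots,\qquad c_1>0,
\]
valid in a $\Delta$-domain at $\rho$, with $\rho$ the radius of convergence of $u(z)$.

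Next I would push this singularity through to $M(z,0)=h(z,u(z))$. The identity $u\bigl(1-zQ_1(z,u)\bigr)=zQ_2(z,u)$ gives $1-zQ_1(z,u(z))=zQ_2(z,u(z))/u(z)$, which is strictly positive on $(0,\rho]$ since $Q_2(\rho,\tau)>0$ by the exclusion of case~3; hence $h$ is analytic at $(\rho,\tau)$. Substituting the Puiseux expansion of $u$ produces
\[
M(z,0)\;=\;h(\rho,\tau) - h_u(\rho,\tau)\,c_1\sqrt{1-z/\rho} + O(1-z/\rho),
\]
so $a_0=h(\rho,\tau)>0$ and $a_1=-h_u(\rho,\tau)\,c_1$. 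The numerator of $h_u$ at $(\rho,\tau)$ equals $\partial_u Q_0\cdot(1-zQ_1)+Q_0\cdot z\,\partial_u Q_1$, a sum of non-negative terms; the exclusion of case~1 forces at least one of $Q_0,Q_1$ to genuinely depend on $u$, so at least one of $\partial_u Q_0(\rho,\tau)$, $\partial_u Q_1(\rho,\tau)$ is strictly positive, giving $h_u(\rho,\tau)>0$ and $a_1<0$ as claimed.

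Finally, set $z_0:=\rho$ and let $b$ be the greatest common divisor of $\{n\ge 1:M_n>0\}$, with $J\subseteq\{0,\ldots,b-1\}$ the residue classes carrying $(M_n)$. By positivity of $\Phi$ and $h$ the $b$ dominant singularities of $M(z,0)$ on $|z|=z_0$ are the rotates of $\rho$ by $b$-th roots of unity, each of the same square-root type as at $\rho$, and $M(z,0)$ is analytic in a common $\Delta$-domain after removing them. Applying the Flajolet--Odlyzko transfer theorem residue class by residue class then produces (\ref{eqTh1.2}) with $c_j>0$. In my view the main obstacle is the first step: verifying that the positive characteristic system $\Phi(\rho,\tau)=\tau$, $\Phi_u(\rho,\tau)=1$ has a solution on the positive real boundary of convergence, and matching the non-degeneracy conditions~(1)--(3) precisely with the hypotheses of the smooth schema; once that is in place, the rest is a routine application of singularity analysis.
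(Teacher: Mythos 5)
Your proof follows the same overall strategy as the paper's: reduce $M(z,0)$ to $u(z)$ via Proposition~\ref{ProTh1}, establish a square-root singularity for $u(z)$ by the smooth implicit-function schema for positive polynomial equations, push it through the rational map $h(z,u)=Q_0(z,u)/(1-zQ_1(z,u))$, and finish with singularity analysis. Your explicit matching of the excluded cases 1--3 to the schema hypotheses ($Q_2(z,0)\not\equiv 0 \Leftrightarrow u\not\equiv 0$, and case~2 $\Leftrightarrow \Phi_{uu}\equiv 0$), and your computation showing $h_u(\rho,\tau)>0$ (hence $a_1<0$) from the exclusion of case~1, are genuine improvements: the paper asserts $a_0>0$, $a_1<0$ with no justification. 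You do implicitly use $Q_0\not\equiv 0$, which should be noted since otherwise $M(z,0)\equiv 0$.

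One point needs repair in the periodicity step. You set $b=\gcd\{n\ge1:M_n>0\}$, but the quantity that governs which rotations $z_0\omega$ with $|\omega|=1$ are singular is the \emph{period} of the support, i.e.\ $\gcd\{n-m:M_n>0,\ M_m>0\}$. These disagree whenever $0$ is not in the support: if, say, $M_n>0$ precisely for odd $n$ (which can occur, e.g.\ with $Q_0=z$, $Q_1=u$, $Q_2=1+u^2$), then your $b$ equals $1$ and $J=\{0\}$, and your conclusion would wrongly assert $M_n\sim c_0 n^{-3/2}z_0^{-n}$ for all $n$, contradicting $M_{2k}=0$. The paper avoids this by first deriving the structural form $u(z)=z^aU(z^b)$ with $U$ aperiodic and then propagating periodicity through $h$; alternatively, define $b$ as the period of the support of $M(z,0)$ and $J$ as the residues that are eventually positive, after which the Daffodil-lemma argument for the location of the dominant singularities and the residue-by-residue transfer go through as you describe.
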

This result is quite easy to prove (see Section \ref{pth1}). We just want to mention that
there are variations of the above model, for example equations of the form 
\[
M(z,u) = Q_0(z,u) + z M(z,u) Q_1(z,u) +  z\frac{M(z,u)-M(z,0)}u  Q_2(z,u) + u M(z,0) Q_3(z,u),
\]
that can be handled in the same way; see \cite{Prod}. However, the asymptotics can be slightly
different. For example one might have $n^{-1/2}$ instead of $n^{-3/2}$ in the subexponential growth of $M_n$
(namely if $z_0Q_1(z_0,u(z_0)) + Q_3(z_0,u(z_0)) = 1$; if $Q_3=0$ then we y have
$z_0Q_1(z_0,u(z_0)) < 1$).

In the non-linear case the situation is more involved. Here we find the 
solution function $M(z,0)$ in the following way.

\begin{prop}\label{ProTh2}
Suppose that $Q$ is a polynomial in $\alpha_0,\alpha_1,z,u$ with non-negative coefficients
that depends (at least) on $\alpha_1$, that is, $Q_{\alpha_1} \ne 0$, and 
let $M(z,u)$ be the power series solution of (\ref{eqBMJ}).
Furthermore we assume that $Q$ is not linear in $\alpha_0$ and $\alpha_1$, that is,
$Q_{\alpha_0\alpha_0} \ne 0$, or $Q_{\alpha_0\alpha_1} \ne 0$ or $Q_{\alpha_1\alpha_1} \ne 0$.

Let $f(z), u(z), w(z)$  be the power series solution of the system of equations 
\begin{align}
f(z) &=  F_0(u(z)) + zQ(f(z), w(z), z,u(z)), \nonumber \\
u(z) &=  z u(z)  Q_{\alpha_0} (f(z), w(z), z,u(z)) + z  Q_{\alpha_1} (f(z), w(z), z,u(z)),  \label{eqnewsystem} \\
w(z)  &= F_{0}'(u(z)) + z Q_u (f(z), w(z), z,u(z)) + z w(z)   Q_{\alpha_0} (f(z), w(z), z,u(z)).\nonumber 
\end{align}
with $f(0) = F_0(0)$, $u(0) = 0$, $w(0) = F_{0}'(0)$. Then
\[
M(z,0) = f(z) - w(z)u(z).
\]
\end{prop}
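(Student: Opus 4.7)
The plan is to follow the Bousquet-M\'elou--Jehanne approach of \cite{BM-J}. I first rewrite (\ref{eqBMJ}) as $P(M(z,u), M(z,0), z, u) = 0$, where
\[
P(x,y,z,u) := x - F_0(u) - z Q\!\left(x, \frac{x-y}{u}, z, u\right),
\]
and look for a formal power series $U = U(z)$ with $U(0)=0$ at which the algebraic curve $\{P(\cdot, M(z,0), z, \cdot) = 0\}$ has a double point along the parametric branch $x = M(z,u)$. Setting $f(z) := M(z, U(z))$, this branching ansatz imposes the three BMJ conditions
\[
P = 0, \qquad P_x = 0, \qquad P_u = 0
\]
at $(x, y, z, u) = (f(z), M(z,0), z, U(z))$.

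To translate these into the system (\ref{eqnewsystem}), I compute
\[
P_x = 1 - zQ_{\alpha_0} - \frac{zQ_{\alpha_1}}{u}, \qquad P_u = -F_0'(u) + \frac{zQ_{\alpha_1}(x-y)}{u^2} - zQ_u,
\]
and introduce the auxiliary variable $w(z) := (f - M(z,0))/U(z)$, which is precisely the second argument of $Q$ evaluated at the branching point. Then $P=0$ becomes the first equation of (\ref{eqnewsystem}); multiplying $P_x=0$ by $U$ gives the second; and $P_u=0$, combined with the identity $zQ_{\alpha_1}/U = 1 - zQ_{\alpha_0}$ extracted from $P_x=0$, simplifies to $w = F_0'(U) + zQ_u + zwQ_{\alpha_0}$, the third. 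The concluding formula $M(z,0) = f - Uw$ is simply the defining relation of $w$ rearranged.

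Existence and uniqueness of the formal power series solution to (\ref{eqnewsystem}) then follow from the implicit function theorem. At $z=0$ the three equations reduce to $f = F_0(U)$, $U = 0$, and $w = F_0'(U)$, which force the stated initial values $f(0) = F_0(0)$, $U(0) = 0$, $w(0) = F_0'(0)$. The Jacobian of the right-hand sides with respect to $(f, U, w)$ at this point is upper triangular with ones on the diagonal, hence invertible, so unique formal solutions $f(z)$, $U(z)$, $w(z)$ are produced iteratively in the $z$-adic topology.

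The main obstacle is justifying that the $(f, U, w)$ so obtained genuinely arise from the branching structure of $M(z,u)$, so that $f(z) - U(z)w(z)$ is truly $M(z,0)$ and not a spurious quantity. Following \cite{BM-J}, one eliminates $f$ and $w$ from (\ref{eqnewsystem}) via resultants to obtain a single polynomial equation for $U(z)$ with a unique power series solution satisfying $U(0) = 0$; one then verifies that the actual triple $\big(M(z, U(z)),\, U(z),\, (M(z, U(z)) - M(z,0))/U(z)\big)$ satisfies the three BMJ conditions $P = P_x = P_u = 0$, and uniqueness of the formal solution identifies this triple with the $(f, U, w)$ produced by the implicit function theorem. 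The hypothesis $Q_{\alpha_1} \ne 0$ ensures that $U(z)$ has a nonzero linear term in $z$, so that division by $U(z)$ in the definition of $w$ is meaningful as a formal power series manipulation.
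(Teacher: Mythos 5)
Your proof follows the same Bousquet-M\'elou--Jehanne route as the paper: define $P$, impose $P = P_{x_0} = P_u = 0$ along a branch $u=u(z)$, substitute $w = (f - M(z,0))/u$, and rearrange the third equation using $zQ_{\alpha_1}/u = 1 - zQ_{\alpha_0}$ to obtain the system (\ref{eqnewsystem}). Your added remarks on formal existence/uniqueness of the solution and on why the resulting triple genuinely corresponds to $M(z,u)$ (deferred to \cite{BM-J}) merely fill in details the paper also leaves to that reference, so the arguments are essentially identical.
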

\noindent
The meaning of $w(z)$ will become clear later in the proof of the proposition in Section 4. 
In Theorem~\ref{Th2} we  assume that $Q_{\alpha_0u} \ne 0$, which implies that the system (\ref{eqnewsystem})
is strongly connected. This means that the  dependency di-graph of the system is strongly connected as 
discussed in Section~\ref{pth2}.

Again there are some {\it degenerate} cases. We do not give a complete list and we just  discuss  some of them. 
We also comment on the case $Q_{\alpha_1} = 0$. 
Given a multivariate function $f$ we replace one of its variables with a dot if $f$ actually does not depend on this variable. 
\begin{itemize}
\item Suppose that $Q_u = F_{0}' = 0$, that is, $F_0$ is constant and  $Q$ does not depend on $u$. 
Here $w(z) = 0$ and consequently
\[
M(z,0) = f(z),
\]
where $f(z)$ is the solution of the equation
\[
f(z) = F_0 + zQ(f(z),0,z,\cdot).
\]
Thus, depending on the degree of $\alpha_0$ in $Q(\alpha_0,0,z,\cdot)$, the solution function $M(z,0)$ 
is either a polynomial, a rational function, or  
it has a square-root singularity as in (\ref{eqsqrtsing}); see \cite{0}.
\item Next suppose that $Q_u = Q_{\alpha_0} = 0$ but $F_{0}'\ne 0$.
Here we have we are left with the equations 
\[
f = F_0(u) + zQ(w,z), \quad u = zQ_{\alpha_1}(w,z),\quad w = F_{0}'(u).
\]
Thus, we have to solve the equation $u = zQ_{\alpha_1}(F_{0}'(u),z)$ to obtain $u=u(z)$ and 
consequently $w(z) = F_{0}'(u(z))$ and $f(z) = F_0(u(z)) + zQ(w(z),z)$.
Hence, depending on the structure of $zQ_{\alpha_1}(F_{0}'(u),z)$ we obtain a polynomial, a rational function, or 
a square-root singularity for 
\begin{align*}
M(z,0) &= f(z) - w(z)u(z) \\
& = z Q(w(z),z)+ F_0 ( u(z) ) - u(z) F_{0}'( u(z) ).
\end{align*}
\item
Finally, if $Q_{\alpha_1} = 0$ then 
we have an equation of the form
\[
M(z,u) = F_0(u) + zQ(M(z,u),z,u).
\]
In this case the catalytic variable $u$ is not necessary and we can set it to $0$.
Hence, depending on the structure of $Q$ we just get a polynomial, a rational function,
or a square-root singularity for $M(z,0)$ (see \cite{0}).
\end{itemize}

\begin{theo}\label{Th2}
Suppose that $Q$ is a polynomial in $\alpha_0,\alpha_1,z,u$ with non-negative coefficients
that depends (at least) on $\alpha_1$, that is, $Q_{\alpha_1} \ne 0$ and 
let $M(z,u)$ be the power series solution of (\ref {eqBMJ}).
Furthermore we assume that $Q$ is not linear in $\alpha_0$ and $\alpha_1$, that is,
$Q_{\alpha_0\alpha_0} \ne 0$ or $Q_{\alpha_0\alpha_1} \ne 0$ or $Q_{\alpha_1\alpha_1} \ne 0$.
We assume additionally that $Q_{\alpha_0u} \ne 0$.

Let $z_0>0$ denote the radius of convergence of $M(z,0)$. 
Then the local Puiseux expansion of $M(z,0)$ around $z_0$ is 
given by 
\begin{equation}\label{eqTh2.1}
M(z,0) = a_0+a_2(1-z/z_0)+a_3(1-z/z_0)^{3/2}+O((1-z/z_0)^2),
\end{equation}
where $a_0>0$. 

If we further assume that $F_0'(0) = 0$ and $Q_{\alpha_1}(F_0(0),0,0,0) \ne 0$, 
then $a_3 > 0$. In this case there exists $b\ge 1$ and a residue class $a$ modulo $b$ such that
\begin{equation}\label{eqTh2.2}
M_n = [z^n]M(z,0)  c\, n^{-5/2} z_0^{-n} \left( 1 + O\left( \frac 1n \right) \right), \qquad (n\equiv a \bmod b,\ n\to \infty)
\end{equation}
for some constant $c> 0$, and $M_n = 0$ for $n\not \equiv  a \bmod b$.
\end{theo}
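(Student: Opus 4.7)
The plan is to invoke Proposition~\ref{ProTh2} to reduce the asymptotic analysis of $M(z,0)$ to the singular analysis of the strongly connected positive polynomial system~(\ref{eqnewsystem}), and then to isolate the algebraic cancellation in $f-wu$ that upgrades the generic square-root singularity into the $3/2$-singularity characteristic of the quadratic method. By Proposition~\ref{ProTh2}, $M(z,0)=f(z)-w(z)u(z)$, where $(f,u,w)$ is the unique positive power series solution of~(\ref{eqnewsystem}). The hypothesis $Q_{\alpha_0u}\neq 0$, together with $Q_{\alpha_1}\neq 0$ and the genuine non-linearity of $Q$, forces the dependency di-graph of~(\ref{eqnewsystem}) to be strongly connected; a Drmota--Lalley--Woods type theorem for positive strongly connected polynomial systems then yields a common radius of convergence $z_0>0$ and joint Puiseux expansions
\[
f(z)=f_0+f_1X+f_2X^2+\cdots,\qquad u(z)=u_0+u_1X+u_2X^2+\cdots,\qquad w(z)=w_0+w_1X+w_2X^2+\cdots,
\]
in $X=(1-z/z_0)^{1/2}$, with positive leading coefficients $f_0,u_0,w_0$ and with $(f_1,u_1,w_1)$ spanning the one-dimensional kernel of the degenerate Jacobian of~(\ref{eqnewsystem}) at the critical point.

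The heart of the proof is the cancellation identity $f_1=w_0u_1+u_0w_1$, which kills the $X$-coefficient of $M(z,0)=f-wu$ and promotes $X^3=(1-z/z_0)^{3/2}$ to the first non-trivial fractional contribution. Substituting the ansatz into the three equations of~(\ref{eqnewsystem}) and equating the coefficients of $X^1$, the first equation gives
\[
f_1\bigl(1-z_0Q_{\alpha_0}\bigr)=u_1\bigl(F_0'(u_0)+z_0Q_u\bigr)+z_0Q_{\alpha_1}\,w_1,
\]
where all derivatives of $Q$ are evaluated at $(f_0,w_0,z_0,u_0)$. The leading-order parts of the second and third equations read respectively $u_0(1-z_0Q_{\alpha_0})=z_0Q_{\alpha_1}$ and $w_0(1-z_0Q_{\alpha_0})=F_0'(u_0)+z_0Q_u$; substituting these into the preceding display and cancelling the common factor $1-z_0Q_{\alpha_0}$ yields $f_1=u_1w_0+u_0w_1$. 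This is the key structural identity: it is the algebraic shadow of the Bousquet--M\'elou/Jehanne double-root condition $P=P_{x_0}=P_u=0$ that characterizes the quadratic method, and it is the main technical obstacle. If the common factor $1-z_0Q_{\alpha_0}$ happens to vanish at the critical point — which is the genuinely degenerate case in the BMJ formalism — the identity is still forced by the same structural coupling, but one has to push the Puiseux expansion one order further and repeat the cancellation argument, which is the most delicate verification.

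Pushing the expansion to order $X^3$ then expresses $a_3$ as an explicit rational function of the first and second Puiseux coefficients of $f,u,w$. Under the additional assumptions $F_0'(0)=0$ and $Q_{\alpha_1}(F_0(0),0,0,0)\neq 0$, I would verify that all degenerate cases enumerated before the theorem are excluded — in particular $u(z)$ is a proper positive power series with a non-zero linear term — and conclude $a_3>0$ by a Perron--Frobenius argument on the positive Jacobian of~(\ref{eqnewsystem}).

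Finally, the Flajolet--Odlyzko transfer theorem applied to~(\ref{eqTh2.1}) yields
\[
[z^n]M(z,0)\sim\frac{a_3}{\Gamma(-3/2)}\,n^{-5/2}z_0^{-n}=\frac{3a_3}{4\sqrt{\pi}}\,n^{-5/2}z_0^{-n}.
\]
The restriction to a single residue class $n\equiv a\pmod b$ comes from analysing the singularities of $M(z,0)$ on $|z|=z_0$: strong connectedness of~(\ref{eqnewsystem}) forces $f,u,w$ to share a common period $b$ determined by the gcd of the supports of $F_0$ and $Q$, every singularity on $|z|=z_0$ is a $b$-th root of $z_0$, and summing the transfer-theorem contributions at each singularity isolates exactly the residue class claimed.
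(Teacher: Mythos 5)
Your derivation of the cancellation $y_1=0$ is correct and is actually cleaner than the paper's. The paper passes through the Weierstrass preparation theorem to write $P=K\bigl((x_0-G)^2-H\bigr)$ and extracts $y_1=0$ from $H_{x_1}y_1=0$; your direct substitution of the Puiseux ansatz into the three equations of \eqref{eqnewsystem}, followed by the observation that the leading-order identities $u_0(1-z_0Q_{\alpha_0})=z_0Q_{\alpha_1}$ and $w_0(1-z_0Q_{\alpha_0})=F_0'(u_0)+z_0Q_u$ allow you to cancel the common factor in the $X^1$-coefficient of the $f$-equation, produces $f_1=w_0u_1+u_0w_1$ in one stroke. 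The degenerate case you flag, $1-z_0Q_{\alpha_0}=0$, in fact cannot occur: at the critical point the Jacobian $\mathbf{J}$ of the strongly connected positive system is an irreducible non-negative matrix with Perron eigenvalue $1$, and for such a matrix every diagonal entry is \emph{strictly} smaller than the spectral radius, so $z_0Q_{\alpha_0}=\mathbf{J}_{ff}<1$. You should state this rather than leaving it as an unresolved "delicate verification."

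The genuine gap is the claim $a_3>0$. You write that one should "conclude $a_3>0$ by a Perron--Frobenius argument on the positive Jacobian," but there is no such argument. Positivity of the coefficients of $M(z,0)$ only yields $a_3\ge0$ (a negative $a_3$ would force $M_n<0$ for large $n$ in the appropriate residue class); the real difficulty is ruling out $a_3=0$, and this is exactly where the paper does the hardest work. After computing
\[
y_3=\frac{2u_1z_0}{3H_{x_1}^2}\bigl(H_{x_1}H_{zu}-H_zH_{x_1u}\bigr),
\]
the paper has to show $H_{x_1}H_{zu}-H_zH_{x_1u}\neq0$, which is equivalent to the non-vanishing of a $3\times3$ functional determinant $\Delta$ of $P$. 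The argument there is global and somewhat surprising: one first shows that if $\Delta$ vanishes at the critical point, then by uniqueness of the local solution curve it vanishes identically along the whole curve $(\widetilde f(u),\widetilde y(u),\widetilde z(u),u)$ for $u\in[0,u_0]$; one then computes $\Delta$ asymptotically as $u\to0$ and finds a leading term $Q_{\alpha_1}(F_0(0),0,0,0)\,u+O(u^2)$, whose non-vanishing is precisely where the extra hypotheses $F_0'(0)=0$ and $Q_{\alpha_1}(F_0(0),0,0,0)\neq0$ enter. Your proposal treats $y_1=0$ as "the main technical obstacle," but that is the easy half; $a_3\neq0$ is the crux and your sketch leaves it entirely open. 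The periodicity discussion at the end is essentially the paper's, but you should also note (as the paper does) that the congruence $a_1\equiv a_2+a_3\bmod b$ for the shifts in $f(z)=z^{a_1}F(z^b)$, $w(z)u(z)=z^{a_2+a_3}W(z^b)U(z^b)$ is \emph{forced} by non-negativity of the coefficients of $M(z,0)$; without this observation the two pieces could live in different residue classes and cancellation could not occur.
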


The plan of the paper is as follows. In the next section we collect some 
application examples of Theorems~\ref{Th1} and \ref{Th2}.
We then prove Proposition~\ref{ProTh1} and Theorem~\ref{Th1} in Section~\ref{pth1},
and Proposition~\ref{ProTh2} and Theorem~\ref{Th2} in Section~\ref{pth2}.
Finally we provide more information on the solution of catalytic equations.
In particular we formulate a quite general central limit theorem, involving an additional parameter, in  Section~\ref{sec:clt}.

\section{Examples}\label{sec:ex}

\subsection{The linear case}

Natural examples for the linear case (Proposition~\ref{ProTh1} and Theorem~\ref{Th1}) come from the enumeration of 
lattice path. We consider paths starting from the coordinate point $(0,0)$ (or from $(0,t)$, $t\in \mathbb{N}$) 
and allowed to move only to the right (up, straight or down), but forbid going below the $x$-axis $y=0$ at each step.
Define a step set $\mathcal{S}=\{(a_1,b_1),(a_2,b_2),\cdots,(a_s,b_s)|(a_j,b_j)\in \mathbb{N}\times\mathbb{Z}\}$,
and let $f_{n,k}$ be the number of paths ending at point $(n,k)$, where each step is in $\mathcal{S}$. The associated generating function is then defined as
  \[
F(z,u)=\sum_{n,k\ge 0}{f_{n,k}z^nu^k}.
\]
\begin{ex}(Motzkin Paths) We start from $(0,0)$ with step set $\mathcal{S} = \{(1,1),(1,0),(1,-1)\}$. The functional equation of its associated generating function is as follows:
\begin{align*}
F(z,u)&=1+z\left(u+1+\frac{1}{u}\right)F(z,u)-\frac{z}{u}F(z,0)\\
&=1+z(u+1)F(z,u)+z\frac{F(z,u)-F(z,0)}{u},
\end{align*}
which in the notation of   \eqref{eqlinear} corresponds to 
\[
Q_0(z,u)=1,\quad Q_1(z,u)=u+1,\quad \text{and} \quad Q_2(z,u)=1.
\]
We let $u(z)$ be the power series solution of the equation
\[
u(z) = zQ_2(z,u(z)) + zu(z)Q_1(z,u(z)) = z+z u(z)(1+u(z)),
\]
that is,
\[
u(z) = \frac{1-z-\sqrt{1-2z-3z^2}}{2z}.
\]
Then $F(z,0)$ is given by
\[
F(z,0) = \frac{Q_0(z,u(z))}{1-zQ_1(z,u(z))}=\frac{1}{1-z(1+u(z))}=\frac{1-z-\sqrt{1-2z-3z^2}}{2z^2},
\]
and
\begin{align*}
M^*_n=f_{n,0}=[z^n]F(z,0) = \sum_{k=0}^{\lfloor n/2\rfloor}{\frac{n!}{(n-2k)!k!(k+1)!}}
\sim \frac{3\sqrt 3}{2\sqrt \pi} n^{-3/2} 3^n.
\end{align*}
These numbers are also called ``Motzkin numbers''.
\end{ex}
\begin{ex}
We start from $(0,k_0)$ with step set $\mathcal{S} = \{(2,0),(1,-1)\}$. 
Here the functional equation  is given by
\begin{align*}
F(z,u)&=u^{k_0}+(z^2+\frac{z}{u})F(z,u)-\frac{z}{u}F(z,0)\\
&=u^{k_0}+z^2F(z,u)+z\frac{F(z,u)-F(z,0)}{u},
\end{align*}
which corresponds to 
\[
Q_0(z,u)=u^{k_0},\quad Q_1(z,u)=z\quad \text{and} \quad Q_2(z,u)=1.
\]
This is actually a {\it degenerate} case since $Q_1$ and $Q_2$  depend only on $z$. 
Here $u(z)$ is a rational function
\[
u(z) = \frac{zQ_2(z,\cdot)}{1-zQ_1(z,\cdot)} = \frac{z}{1-z^2},
\]
as well as 
\[
F(z,0) = \frac{Q_0(z,u(z))}{1-zQ_1(z,u(z))}=\frac{u(z)^{k_0}}{1-z^2}=\frac{z^{k_0}}{(1-z^2)^{k_0+1}}.
\]
\end{ex}
\begin{ex}
We start again from $(0,0)$ but now with step set $\mathcal{S}= \{(2,0),(1,1),(1,0)\}$, and we also assume that the step $(1,0)$ 
is forbidden on the $x$-axis $y = 0$. The functional equation in this case is 
\begin{align*}
F(z,u)&=1+z(z+u+1)F(z,u)-zF(z,0)\\
&=1+z(z+u)F(z,u)+zu\frac{F(z,u)-F(z,0)}{u},
\end{align*}
that is, we have
\[
Q_0(z,u)=1,\quad Q_1(z,u)=z+u\quad \text{and} \quad Q_2(z,u)=u.
\]
Here $Q_2$ has $u$ as a factor so that we are again in a {\it degenerate} case.
We have $u(z) = 0$ and consequently
\[
F(z,0)=\frac{1}{1-z^2}=1+z^2+z^4+z^6+\cdots.
\]
\end{ex}

\subsection{The non-linear case}

We collect here some examples from the enumeration of planar maps. The starting point is 
the classical example of all planar maps.

\begin{ex}
Let $M(z, u)$ be the generating function  of planar maps with $n$ edges and in which the degree of the root-face is equal $k$.
We have already mentioned that $M(z,u)$ satisfies the non-linear catalytic equation (\ref{1.3}). 
In order to apply Proposition~\ref{ProTh2} and  Theorem~\ref{Th2} we use the substitution $u\to u+1$ and obtain
\[
M(z, u+1)=1+z(u+1)\left( (u+1)M(z, u+1)^2 + M(z, u) + \frac{M(z, u+1)-M(z, 1+0)}{u}\right),
\]
that is, we have $F_0(u) = 1$, and $Q(\alpha_0,\alpha_1,z,u)= (u+1)^2\alpha_0^2 + (u+1)\alpha_0 + (u+1)\alpha_1$.
Here $Q_{\alpha_1} = u+1 \ne 0$, $Q_{\alpha_0,u} \ne 0$, and $Q_{\alpha_0,\alpha_0} \ne 0 $, so that 
Theorem~\ref{Th2} applies. Of course this is in accordance with 
\[
M(z,1) = \sum_{n\ge 0} M_n z^n = \frac{18z - 1 + (1 - 12z)^{3/2}}{54z^2},
\]
and 
\[
M_n=[z^n]M(z,1)\sim \frac{2}{\sqrt{\pi}} n^{-5/2} 12^{n}.
\]
\end{ex}

\begin{ex}
Let $E(z,u)$ be the  generating function of bipartite planar maps
which satisfies the catalytic equation
\[
E(z, u) = 1 + zu^2E(z, u)^2 + u^2z
\frac{ E(z, u) -E(z, 1)}{u^2-1}.
\]
Here we use the substitution $u = \sqrt{1 +v}$ and obtain
\[
 E(z, \sqrt{1 +v}) = 1 + z(v+1) E(z, \sqrt{1 +v})^2 + (v+1)z
\frac{ E(z, \sqrt{1 +v}) - E(z, 1)}{v},
\]
which is of a type where Theorem~\ref{Th2} applies:
\[
F(v) = 1, \quad Q(\alpha_0,\alpha_1,z,v) = \alpha_0^2(v+1) + \alpha_1(v+1).
\]
\end{ex}

\begin{ex}
Next let $B(z,u)$ be the generating function of $2$-connected planar maps. It satisfies
\[
B(z, u) = z^2u + zu B(z,u) + u(z+B(z,u)) \frac{ B(z, u) -B(z, 1)}{u-1}.
\]
After substituting $u$ by $u+1$ we obtain
\[
B(z, u+1) = z^2(u+1) + z(u+1) B(z,u+1) + (u+1)(z+B(z,u+1)) \frac{ B(z, u+1) - B(z, 1)}{u},
\]
which is not exactly of the form (\ref{eqBMJ}). Nevertheless   
the same methods as in the proof of Theorem~\ref{Th2} apply -- we just have to observe that
the analogue of the system of equations (\ref{eqnewsystem.2}) has proper positive
power series solutions -- and we obtain the same result. 
\end{ex}

\begin{ex}
Finally let $T(z,u)$ the generating function for planar triangulations, which  satisfies
\[
T(z, u) = (1-uT(z,u)) + (z+u)T(z,u)^2 + z(1-uT(z,u))\frac{ T(z, u) -T(z, 0)}{u} .
\]
In order to get rid of the negative sign we use the substitution $\widetilde T(z,u) = T(z,u)/(1-uT(z,u))$ and we obtain 
\[
\widetilde T(z, u) = 1 + u \widetilde T(z,u) + z(1 + \widetilde T(z,u))\frac{ \widetilde T(z, u) -\widetilde T(z, 0)}{u}.
\]
Again this is not precisely of the form (\ref{eqBMJ}) but our methods apply once more. 
Note that $\widetilde T(z, 0) = T(z, 0)$.
\end{ex}

\section{Proofs of Proposition~\ref{ProTh1} and Theorem~\ref{Th1}}\label{pth1}

\subsection{Proof of Proposition~\ref{ProTh1}}

As already mentioned in the introduction, we rewrite (\ref{eqlinear}) as
\[
M(z,u) \left(u - zu Q_1(z,u) -  z Q_2(z,u) \right) = uQ_0(z,u) - z M(z,0) Q_2(z,u).
\]
It is clear that if $u=u(z)$ satisfies 
\begin{equation}\label{equequ}
u = zQ_2(z,u) + z u Q_1(z,u),
\end{equation}
then the {\it kernel}
$K(z,u) = u - zu Q_1(z,u) - z Q_2(z,u)$ is identically zero, which implies that
$M(z,0)$ is given by $M(z,0) = u(z) Q_0(z,u(z))/(z Q_2(z,u(z)))$. 
Since $z Q_2(z,u(z)) = u(z) (1-z Q_1(z,u(z)))$, we also have 
$$M(z,0) =  \frac{Q_0(z,u(z))}{1-z Q_1(z,u(z))},$$ as claimed.

Finally we mention that  Equation (\ref{equequ}) has always a unique power series solution
$u = u(z)$ with $u(0)= 0$. On a formal level this is immediately clear by comparing coefficients and
rewriting (\ref{equequ}) as a recurrence for the coefficients of $u(z)$. 
However, (\ref{equequ}) can be also seen as a fixed point equation, which is a contraction 
if $z$ and $u$ are sufficiently small. This means that the recurrence $u_0(z) = 0$, 
$u_{k+1}(z) =  zQ_2(z,u_k(z)) +z u_k(z) Q_1(z,u_k(z))$, $k\ge 0$, has an analytic limit
$u(z)$, provided that $z$ is sufficiently small in modulus.

\subsection{Proof of  Theorem~\ref{Th1}}

Since $u(z)$ has non-negative 
coefficients the dominant singularity is positive and equals the radius of convergence
of $u(z)$. 

We do not comment on the {\it degenerate cases} that are discussed after  Theorem~\ref{Th1}, since there are only rational functions. In the non-degenerate case  Equation (\ref{equequ})
is a non-linear positive polynomial equation for $u(z)$. Here it  follows by general considerations that
$u(z)$ has a square-root singularity at the radius of convergence~$z_0$
\[
u(z) = u_0 + u_1 (1-z/z_0)^{1/2} + u_2 (1-z/z_0) + u_3 (1-z/z_0)^{3/2} + \cdots,
\]
where $z_0>0$ and $u_0 >0$ are (uniquely) given by the system of equations
\begin{align*}
u_0 &= z_0Q_2(z_0,u_0) + z_0 u_0 Q_1(z_0,u_0), \\
1 &= z_0Q_{2,u}(z_0,u_0) + z_0 Q_1(z_0,u_0) + z_0 u_0 Q_{1,u}(z_0,u_0).
\end{align*} 
See \cite{0} and \cite{Drm-randomtrees} for details. In particular we have $u_1 < 0$.

More precisely, $u(z)$ can be represented as $u(z) = z^a U(z^b)$, where $a\ge 0$,
$b\ge 1$, and $U(z)$ has also
a square-root singularity at $z = z_0^{1/b}$, that is the only singularity on the circle
$|z|\le z_0^{1/b}$. In particular it follows that the coefficients of $U_k = [z^k] U(z)$ are
asymptotically given by $U_k \sim c k^{-3/2} z_0^{-k/b}$ for some $c> 0$. 
With the help of $U(z)$ we can now completely describe $M(z,0)$ and directly
deduce (\ref{eqTh1.2}).

\section{Proof of Proposition~\ref{ProTh2} and Theorem~\ref{Th2}}\label{pth2}

\subsection{Proof of Proposition~\ref{ProTh2}}

As mentioned in the introduction, general catalytic equations can 
be solved with the help of the method of Bousquet-M\'elou and Jehanne~\cite{BM-J}. 
For this purpose we set	
\begin{equation}\label{eqPrep}
P(x_0,x_1,z,u) = F_0(u) + zQ(x_0, (x_0-x_1)/u, z,u) - x_0.
\end{equation}
The next step is to find functions $x_0 = f(z)$, $x_1 = y(z)$, and $u = u(z)$ such that
$P = 0$, $P_{x_0} = 0$, and $P_u = 0$. In our situation this means that
\begin{align*}
f(z) &=  F_0(u(z)) + zQ(f(z), (f(z)-y(z))/u(z), z,u(z)),\\
1 &=  z Q_{\alpha_0} (f(z), (f(z)-y(z))/u(z), z,u(z)) \\
&+ \frac z{u(z)}  Q_{\alpha_1} (f(z), (f(z)-y(z))/u(z), z,u(z)),\\
0  &= F_{0}'(u(z)) + z Q_u (f(z), (f(z)-y(z))/u(z), z,u(z)) \\& - 
z \frac{f(z)-y(z)}{u(z)^2}  Q_{\alpha_1} (f(z), (f(z)-y(z))/u(z), z,u(z)).
\end{align*}
In order to simplify this system we set $w=w(z) = (f(z)-y(z))/u(z)$, multiply the second equation by $u(z)$
and replace $z Q_{\alpha_1}/u(z)$ by $1- z Q_{\alpha_0}$ in the third equation. This leads to the system
\begin{align}
f(z) &=  F_0(u(z)) + zQ(f(z), w(z), z,u(z)), \nonumber \\
u(z) &=  z u(z)  Q_{\alpha_0} (f(z), w(z), z,u(z)) + z  Q_{\alpha_1} (f(z), w(z), z,u(z)),  \label{eqnewsystem.2} \\
w(z)  &= F_{0}'(u(z)) + z Q_u (f(z), w(z), z,u(z)) + z w(z)   Q_{\alpha_0} (f(z), w(z), z,u(z)),\nonumber 
\end{align}
which is precisely \eqref{eqnewsystem}.
This is a (so-called) positive polynomial system of equations for the unknown functions
$f(z)$, $w(z)$, and $u(z)$; recall that the coefficients of $F_0$ and $Q$ are non-negative.
It is easy to show that the system (\ref{eqnewsystem.2}) has unique power series solutions
with $f(0) = F_0(0)$, $w(0) = F_{0}'(0)$, $u(0) = 0$ and non-negative coefficients.
Thus, $y(z) = f(z)-u(z) w(z)$ is a power series, too, and consequently 
$M(z,0) = y(z) = f(z)-u(z) w(z)$.
 
\subsection{Proof of  Theorem~\ref{Th2}}

Positive polynomial systems of equations are discussed in detail in \cite{0}. In particular if the system is 
strongly connected then we know that there is a common dominant singularity $z_0$
and $f(z)$, $w(z)$, and $u(z)$ have a square root singularity at $z_0$ of the form
(\ref{eqsqrtsing}):
\begin{align}
f(z) &= f_0 + f_1 Z + f_2Z^2 + f_3 Z^3 + \cdots,   \nonumber \\
u(z) &= u_0 + u_1 Z + u_2Z^2 + u_3 Z^3 + \cdots,   \label{eqsingrep-3} \\
w(z)  &= w_0 + w_1 Z + w_2Z^2 + w_3 Z^3 + \cdots,   \nonumber
\end{align}
with $Z = \sqrt{1-z/z_0}$ and where $f_1< 0$, $u_1<0$ and $w_1< 0$. 
Thus, it  follows that $M(z,0) = y(z) =  f(z)-w(z)u(z)$ has also the  
same kind of singularity: 
\begin{equation}\label{eqyrep}
y(z) = y_0 + y_1 Z + y_2Z^2 + y_3 Z^3 + \cdots.
\end{equation}

Hence, in order to complete the proof of Theorem~\ref{Th2} we have to show
the following properties:
\begin{enumerate}
\item If $Q_{\alpha_0u}\ne 0$ then the system (\ref{eqnewsystem.2}) is strongly connected.
\item We have $y_1 = 0$ in the expansion (\ref{eqyrep}).
\item If $F_0'(0) = 0$ and $Q_{\alpha_1}(F_0(0),0,0,0) \ne 0$ then $y_3 > 0$ in the expansion (\ref{eqyrep}).
\end{enumerate}
With these properties the singular structure of $M(z,0)$ at $z_0$ is precisely
of the form (\ref{eqTh2.1}). Furthermore, the asymptotics of $M_n$ follows in the following way.
Since the system (\ref{eqnewsystem.2}) is non-linear and strongly connected 
we know from \cite{0} that there exists
$a_1,a_2,a_3 \ge 0$ and $b\ge 1$ such that $f(z) = z^{a_1} F(z^b)$, $w(z) = z^{a_2} W(z^b)$, $u(z) = z^{a_3} U(z^b)$, 
where $F$, $W$, and $U$ have square-root singularities at $z= z_0^{1/b}$ but no other singularities on the circle
$|z|\le z_0^{1/b}$. It also follows that $M(z,0) = z^{a_1} F(z^b) - z^{a_2+a_3} W(z^b) U(z^b)$. If $a_1 \not\equiv a_2+a_3 \bmod b$
then $M(z,0)$ would have negative coefficients $M_n$ for $n\equiv a_2+a_3 \bmod b$ which is impossible. Thus,
$a_1 \equiv a_2+a_3 \bmod b$ and we have positive coefficients for $n\equiv a_1 \bmod b$ (if $n$ is sufficiently large) and
zero coefficients else. From $M(z,0) = z^{a_1} \widetilde M(z^b)$, where $\widetilde M(z)$ has $z=z_0^{1/b}$ as a singularity 
of type (\ref{eqTh2.1}) and no other singularities on the circle $|z|\le z_0^{1/b}$, we obtain
the asymptotics (\ref{eqTh2.2}).

Finally we comment on the computation of $z_0$. Let ${\bf J} = {\bf J}(f,w,u,z)$ denote the Jacobian matrix 
(with derivatics with respect to $f,w,u$) of 
the right hand side of (\ref{eqnewsystem.2}). Then we consider the extended system of equations
\begin{align}
f_0  &=  F_0(u_0 ) + z_0Q(f_0 , w_0 , z_0,u_0 ), \nonumber \\
u_0  &=  z_0 u_0   Q_{\alpha_0} (f_0 , w_0 , z_0,u_0 ) + z_0  Q_{\alpha_1} (f_0 , w_0 , z_0,u_0 ),  \label{eqnewsystem.3} \\
w_0   &= F_{0}'(u_0 ) + z_0 Q_u (f_0 , w_0 , z_0,u_0 ) + z_0 w_0    Q_{\alpha_0} (f_0 , w_0 , z_0,u_0 ).\nonumber \\
0 & = \det ({\bf I} - {\bf J}(f_0,w_0,u_0,z_0))  \nonumber
\end{align}
and search for the unique positive solution $(f_0,w_0,u_0,z_0)$ such that the 
spectral radius of ${\bf J}(f_0,w_0,u_0,z_0)$ equals $1$. This gives the correct value $z_0$.

\subsubsection{\textbf{Strong connectedness}}
Let $y_j = F_j(z,y_1,\ldots,y_d)$ a $d$-dimensional system of equations. The dependency di-graph of such a 
system consists of vertices $\{y_1,\ldots, y_d\}$ and there is an oriented edge from $y_i$ to $y_i$ if
$F_j$ depends on $y_i$, that is, $F_{j,y_i} \ne 0$. We say that the system is strongly connected if the
dependency di-graph is strongly connected (see \cite{0,Drm-randomtrees}). 
In our present situation our vertex set is $\{ f,u,w \}$. By assumption we have $Q_{\alpha_1}\ne 0$.
Thus, there is always an edge from $w$ to $f$. 

Suppose first the $Q_{\alpha_0\alpha_0}\ne 0$. 
Then by the second equation there is an edge from $f$ to $u$. By assumption we always have $Q_{\alpha_0u} \ne 0$
which implies that there is an edge from $u$ to $w$. This implies a circle $w \to f \to u \to w$ and consequently
stongly connectedness. 

Second suppose that $Q_{\alpha_0\alpha_1}\ne 0$ or $Q_{\alpha_1\alpha_1}\ne 0$. In this case
there is certainly an edge from $w$ to $u$. Furthermore, since $Q_{\alpha_0u} \ne 0$ there is an edge from $u$ to $w$ and
another one from $f$ to $w$. This again leads to a strongly connected di-graph and completes the proof of the first assertion.

\subsubsection{\textbf{The condition $y_1 = 0$}}
In order to prove that $y_1$ vanishes we recall first the approach by 
Bousquet-M\'elou and Jehanne \cite{BM-J}. Starting with the function $P(x_0,x_1,z,u)$ that is given
by (\ref{eqPrep}) we have to solve the system 
\begin{align}
P(f(z),y(z),z,u(z)) &= 0,  \nonumber \\
P_{x_0}(f(z),y(z),z,u(z)) &= 0, \label{eqoldsystem}  \\
P_u(f(z),y(z),z,u(z)) &= 0. \nonumber
\end{align}
Instead of searching for the functions
$f(z)$, $u(z)$ and $w(z)$ we equivalently search for 
$f(z)$, $y(z)$ and $u(z)$. It is also immediately clear that the singular condition
for the system (\ref{eqnewsystem}) implies that the system (\ref{eqoldsystem})
gets singular too. Consequently the functional determinant has to be zero,
evaluated at $(f(z_0), y(z_0), z_0, u(z_0))$. Since $P_{x_0} = P_u = 0$, we get 
\[
\det \left(  \begin{array}{ccc}
P_{x_0} & P_{x_1}  & P_u \\
P_{x_0x_0} & P_{x_0x_1}  & P_{x_0u} \\
P_{x_0u} & P_{x_1u}  & P_{uu}
\end{array} \right) = - P_{x_1} \left( P_{x_0x_0}P_{uu} - P_{x_0u}^2 \right) = 0. 
\]
Otherwise the implicit function theorem would imply that there is an analytic 
continuation. Since $P_{x_1} = -z Q_{\alpha_1}/u\ne 0$ (by assumption $Q_{\alpha_1}\ne 0$)
we obtain the relation $P_{x_0x_0}P_{uu} = P_{x_0u}^2$.

We now discuss the analytic function $P$ at the point  $(f_0,y_0,z_0,u_0) = (f(z_0), y(z_0), z_0, u(z_0))$
in more detail. We already know that $P_{x_0} = 0$. However, by differentiating \eqref{eqPrep} and using $F'_0(0)=0$  it
follows that 
\[
P_{x_0x_0} = z_0 Q_{\alpha_0\alpha_0} + 2 \frac {z_0}{u_0} Q_{\alpha_0\alpha_1} + \frac {z_0}{u_0^2} Q_{\alpha_1\alpha_1} > 0.
\]
Hence by the Weierstrass preparation theorem\footnote
{The Weierstrass preparation theorem says that every non-zero function
$F(z_1,\ldots,z_d)$ with $F(0,\ldots, 0 ) = 0$ 
that is analytic at $(0,\ldots,0)$ has a
unique factorisation $F(z_1,\ldots,z_d) = K(z_1,\ldots,z_d)
W(z_1;z_2,\ldots,z_d)$ into analytic factors, where $K(0,\ldots, 0)\ne 0$
and $W(z_1;z_2,\ldots,z_d) = z_1^d + z_1^{d-1} g_1(z_2,\ldots,z_d) + 
\cdots + g_d(z_2,\ldots,z_d)$ is a so-called Weierstrass polynomial,
that is, all $g_j$ are analytic and satisfy $g_j(0,\ldots,0) = 0$.}
 \cite{Kaup} it follows that $P$ can be locally written as
\begin{equation}\label{eqPKeq}
P(x_0,x_1,z,u) = K(x_0,x_1,z,u) \left( (x_0 - G(x_1,z,u))^2 - H(x_1,z,u) \right),
\end{equation}
where $K$, $G$ and $H$ are analytic function with the properties that
$K(f_0,y_0,z_0,u_0)\ne 0$, $G(y_0,z_0,u_0) = f_0$, and $H(y_0,z_0,u_0) = 0$.

Since $P = 0$ if and only if $(f-G)^2  =  H$ and 
\begin{align*}
P_{x_0} &= K_{x_0} \left( (f-G)^2 - H \right) + 2 K (f-G), \\
P_u &= K_u  \left( (f-G)^2 - H \right) + K \left( - 2(f-G)G_u - H_u \right)
\end{align*}
it follows from (\ref{eqoldsystem}) that 
\begin{equation}\label{eqHHu}
H(y(z),z,u(z)) = 0 \quad \mbox{and}\quad H_u(y(z),z,u(z)) = 0
\end{equation}
for $z$ close to $z_0$. We note that this is precisely a system of equations that appears
in the context of the quadratic method (see \cite{BM-J,DN11}).

Next we will show how the singular condition $P_{x_0x_0}P_{uu} = P_{x_0u}^2$ 
translates into $H_{uu}(y_0,z_0,u_0) = 0$. 
Since
\begin{align*}
P_{x_0x_0} &= K_{x_0x_0} \left( (f-G)^2 - H \right) 
+4K_{x_0}(f-G) + 2K,\\
P_{uu} &= K_{uu}\left( (f-G)^2 - H \right) 
+2K_{u}\left( - 2(f-G)G_u - H_u \right) \\ &+ 
K\left( 2G_u^2 - 2(f-G)G_{uu} - H_{uu} \right),\\
P_{x_0u} &= K_{x_0u} \left( (f-G)^2 - H \right) 
+2K_u(f-G) + K_{x_0} \left( - 2(f-G)G_u - H_u \right)\\
&+K\left(-2G_u - 2(f-G)G_{x_0u} \right)
\end{align*}
it follows that we have 
\begin{align*}
P_{x_0x_0} &= 2K,\\
P_{uu} &= (2 G_u^2 - H_{uu}) K,\\
P_{x_0u} &= -2G_u K
\end{align*}
for $(y,z,u) =(y_0,z_0,u_0)$. Consequently 
the condition $P_{x_0x_0}P_{uu} = P_{x_0u}^2$ implies 
$H_{uu}(y_0,z_0,u_0) = 0$.

In a similar (but much easier way) it also follows that
that $P_{x_1} = -K H_{x_1}$. This also implies
that $H_{x_1} \ne 0$ since $P_{x_1}\ne 0$ (by assumption $Q_{\alpha_1}\ne 0$).

Nest we recall that $u(z)$ and $y(z) = f(z) - u(z)w(z)$ have singular 
(and convergent) expansions of the
form 
\begin{align*}
u(z) &= u_0 + u_1 Z + u_2 Z^2 + u_3 Z^3 + \cdots, \\
y(z) &= y_0 + y_1 Z + y_2 Z^2 + y_3 Z^3 + \cdots, 
\end{align*}
where $Z = \sqrt{1 - z/z_0}$ and $u_1 < 0$. By using the Taylor expansion of
$H$ at $(y_0,z_0,u_0)$ and the property $H(y(z),z,u(z)) =0$ it follows that
\begin{align*}
0=& H_{x_1}\left( y_1 Z + y_2 Z^2 + y_3 Z^3 + \cdots \right) -z_0H_z Z^2 + \frac 12 H_{x_1x_1} 
\left( y_1^2 Z^2 + 2y_1y_2 Z^3 + \cdots \right) \\
& + H_{x_1u}\left( y_1u_1 Z^2 + (y_1u_2+y_2u_1) Z^3 + \cdots \right) - z_0H_{zu} (u_1 Z^3 + \cdots) \\
& - z_0 H_{x_1z} (y_1 Z^3 + \cdots) + \frac 16 H_{uuu}( u_1^3 Z^3 + \cdots)   + \frac 12 H_{x_1uu}( u_1^2 y_1 Z^3 + \cdots)\\
& + \frac 12 H_{x_1x_1u}( u_1 y_1^2 Z^3 + \cdots) + \frac 16 H_{x_1x_1x_1}( y_1^3 Z^3 + \cdots) + O(Z^4).
\end{align*}
By comparing coefficients of $Z$ this implies
\begin{align*}
0 =& H_{x_1} y_1, \\
0 =& H_{x_1} y_2 - z_0 H_z + \frac 12 H_{x_1x_1}y_1^2 + H_{x_1u}y_1u_1, \\
0 =&  H_{x_1} y_3 + H_{x_1x_1}y_1y_2 + H_{x_1u} (y_1u_2+y_2u_1) - z_0H_{zu}u_1- z_0 H_{yz} y_1 +\frac 16 H_{uuu}u_1^3\\
&+ \frac 12 H_{x_1uu} u_1^2 y_1+\frac 12 H_{x_1x_1u} u_1 y_1^2+\frac 16 H_{x_1x_1x_1} y_1^3.
\end{align*}
In particular, since $H_{x_1}\ne 0$ it follows that $y_1 = 0$, which completes the proof of the second property.

\subsubsection{\textbf{The condition $y_3> 0$}}
By taking also into account the second and third  relations above and using $y_1=0$  we get
\begin{align*}
y_2 &= \frac{z_0H_z}{H_{x_1}},\\
y_3 &= \frac{u_1}{H_{x_1}} 
\left( z_0H_{zu} - H_{x_1u} y_2 - \frac 16 H_{uuu}u_1^2 \right).
\end{align*}
By doing the same procedure as above for $H_u$ (where we can only use that $H_{uu} = 0$) we also
get the relation 
\begin{align}\label{u1}
H_{x_1u} y_2 - z_0H_{zu} + \frac 12 H_{uuu}u_1^2 = 0.
\end{align}
Hence $y_3$ can be also represented as
\[
y_3 = \frac{2u_1z_0}{3H_{x_1}^2} 
\left( H_{x_1}H_{zu} -  H_zH_{x_1u}\right).
\]
We already know that $u_1 < 0$ and $H_{x_1} \ne 0$. Thus it remains to show that
\begin{equation}\label{eqy3remains}
 H_{x_1}H_{zu} -  H_zH_{x_1u} \ne 0,
\end{equation}
where we evaluate at $(y_0,z_0,u_0)$. This will show that $y_3 \ne 0$. 

By slightly more involved computations as above it follows that (\ref{eqy3remains}) holds if and only if
the functional determinant
\begin{align}
\Delta = \left| \begin{array}{ccc} 
P_{x_0} & P_{x_1} & P_{z}  \\
P_{x_0x_0} & P_{x_0x_1} & P_{x_0z}  \\
P_{x_0u} & P_{x_1u} & P_{uz}  \\
\end{array}\right| = &- P_{x_1} \left( P_{x_0x_0}P_{uz}-P_{x_0z} P_{x_0u} \right)  \nonumber \\
&+ P_z \left( P_{x_0x_0}P_{ux_1}-P_{x_0x_1} P_{ux_0}  \right)  \ne 0,   \label{eqfuncdet} 
\end{align}
where we evaluate at $(f_0,y_0,z_0,u_0)$, so that $P_{x_0} = 0$.

Let us assume for a moment that (\ref{eqfuncdet}) is satisfied. Then by the 
implicit function theorem it follows that the system 
\begin{equation}\label{eqPPP}
P = 0, \quad P_{x_0} = 0, \quad P_u = 0
\end{equation}
has a unique solution $x_0 = \widetilde f(u)$, $x_1 = \widetilde y(u)$, $z = \widetilde z(u)$ 
with $\widetilde f(u_0) = y_0$, $\widetilde y(u_0) = y_0$, $\widetilde z(u_0) = z_0$.
Actually the converse is {\it almost true} and this is the strategy of our proof.

First we observe that assuming (\ref{eqfuncdet})  the system (\ref{eqPPP}) has a unique solution $x_0 = \widetilde f(u)$, $x_1 = \widetilde y(u)$, $z = \widetilde z(u)$ 
with $\widetilde f(u_0) = y_0$, $\widetilde y(u_0) = y_0$, $\widetilde z(u_0) = z_0$.
For this purpose we go back to the system (\ref{eqnewsystem.2}), which  is equivalent
(after the substitution $w = (f-y)/u$). The essential difference is that the {\it free variable} of 
(\ref{eqnewsystem.2}) is $z$ and in (\ref{eqPPP}) it is $u$. From the theory of positive strongly connected
polynomial systems it follows that at the critical point $z = z_0$ there are precisely two local solutions, namely
\begin{align}
u_+(z) &= u_0 + u_1 Z + u_2 Z^2 + u_3 Z^3 + \cdots \quad\mbox{and} \nonumber \\
u_-(z) &= u_0 - u_1 Z + u_2 Z^2 - u_3 Z^3 + \cdots  \label{equsol}
\end{align}
with $Z = \sqrt{1 - z/z_0}$, and similarly for $f(z)$ and $w(z)$. 
(The method reduces the system first to a single non-linear positive equation for which this
can be easily observed; see \cite{0,Drm-randomtrees}.) 
It is immediate that the two solutions of (\ref{equsol}) correspond to a single
function $z=\widetilde z(u)$ with $\widetilde z'(u_0) = 0$ with $z = \widetilde z(u_+(z)) = \widetilde z(u_-(z))$, and it is 
 clear that $\widetilde f(u)$ and $\widetilde y(u)$ are unique. 
This is a strong indication that (\ref{eqfuncdet}) should be satisfied.
However, there might be exceptional situations as we argue next.

We start by solving the  simpler system
\[
P = 0, \quad P_{x_0} = 0
\]
with solutions $x_0 = \overline f(z,u)$,  $x_1 = \overline y(z,u)$
with $\overline f(z_0,u_0) = f_0$, $\overline y(z_0,u_0) = y_0$. This is certainly 
possible since the functional determinant $P_{x_0}P_{x_0x_1} - P_{x_1}P_{x_0x_0} =  - P_{x_1}P_{x_0x_0}\ne 0$
as long as $x_0,x_1,z, u$ are positive.

Now  we substitute these solution functions into the equation $P_u = 0$ 
and obtain 
\[
P_u ( \overline f(z,u), \overline y(z,u), z, u) = \overline P(z,u) = 0,
\]
that has the unique solution $z = \widetilde z (u)$ with $\widetilde z(u_0) = z_0$. 
Note that 
\begin{align*}
\overline P_z &= P_{ux_0} \overline f_z + P_{ux_1} \overline y_z + P_{uz} \\
&= P_{ux_0} \frac{ P_{x_0x_1}P_z - P_{x_0z} P_{x_1}  }{P_{x_1}P_{x_0x_0}}  - P_{ux_1}  \frac{P_z}{P_{x_1}} + P_{uz} \\
&= -\frac{\Delta}{P_{x_1}P_{x_0x_0}} 
\end{align*}
so that $\overline P_z \ne 0$ if and only if (\ref{eqfuncdet}) holds. 

Thus, it suffices to consider an equation of the form $\overline P(z,u) = 0$ for which we know that
there is a unique solution $z = \widetilde z(u)$ with $\widetilde z(u_0) = z_0$, for which we now assume
that $\overline P_z(z_0,u_0) = 0$. Since $\overline P$ is non-zero there exists $r\ge 1$ such
that $\overline P_z(z_0,u_0) =\overline P_{z^2}(z_0,u_0) = \cdots = \overline P_{z^r}(z_0,u_0) = 0$,
but  $\overline P_{z^{r+1}}(z_0,u_0) \ne 0$. By the Weierstrass preparation theorem there exists an
analytic function $\overline K(z,u)$ with $\overline K(z_0,u_0)\ne 0$ and analytic functions 
$c_0(u), \ldots, c_r(u)$ with $c_j(u_0) = 0$, $0\le j\le r$, such that
\begin{equation}\label{eqPrep2}
\overline P(z,u) = \overline K(z,u) \left( (z-z_0)^{r+1} + c_r(u) (z-z_0)^{r} + \cdots + c_0(u) \right).
\end{equation}
Since we know that there exists a unique solution  $z = \widetilde z(u)$ with $\widetilde z(u_0) = z_0$
of the equation $\overline P(z,u) = 0$, it follows that the polynomial is an $(r+1)$-th power:
\[
(z-z_0)^{r+1} + c_r(u) (z-z_0)^{r} + \cdots + c_0(u) = (z-\widetilde z(u))^{r+1}.
\]
Next we note that the function $u(z)$ is strictly increasing for $0\le z \le z_0$. Thus the inverse function
$\widetilde z(u)$ exists for $0\le u \le u_0$ and can be analytically continued to a region $G$ that 
covers the real interval $[0,u_0]$. (Note that by assumption $u'(0) = Q_{\alpha_1}(F_0(0),0,0,0) \ne 0$ 
so that the inverse function $\widetilde z(u)$ is analytic at $u=0$, too.)
Since 
\begin{equation}\label{eqPrep2}
\overline P(z,u) = \overline K(z,u) (z-\widetilde z(u))^{r+1} .
\end{equation}
holds in a neighborhood of $(z_0,u_0)$ it follows that
$\overline P_z(\widetilde z(u),u) =0$ holds in a neighborhood of $u_0$
and, thus, for all $u\in G$.

Summing up, if (\ref{eqfuncdet}) does not hold at $(y_0,z_0,u_0)$, that is,
$\Delta = 0$ at $(f_0,y_0,z_0,u_0)$ then $\Delta = 0$ 
evaluated at $(\widetilde f(u), \widetilde y(u), \widetilde z(u), u)$ for all $u \in [0,u_0]$.
  
We can now finalize the proof by showing that $\Delta \ne 0$
for $u$ sufficiently close to $0$ provided that $F_0'(0) = 0$. 
Note first that the condition $F_0'(0) = 0$ implies $w(0) = 0$,
where $w = (\widetilde f(u)-\widetilde y(u))/u$.
By using (\ref{eqPrep}) it follows that 
\begin{align*}
u^4 \Delta &= \widetilde z(u) Q_{\alpha_1} \left(  Q_{\alpha_1} + w Q_{\alpha_1\alpha_1} - 2 w Q_{\alpha_0\alpha_1} \right) + O(u^2)\\
& = \widetilde z'(0) u Q_{\alpha_1}(F_0(0),0,0,0) ^2 + O(u^2) \\
&= Q_{\alpha_1}(F_0(0),0,0,0) u + O(u^2).
\end{align*}
By assumption $Q_{\alpha_1}(F_0(0),0,0,0)\ne 0$. Hence $\Delta$ is not identically 0 , so that  $\Delta \ne 0$ for $u$ sufficiently close to $0$.
As argued above this also implies that (\ref{eqfuncdet}) holds, and consequently $y_3 \ne 0$.
Since $y(z) = M(z,0)$ has non-negative coefficients this implies $y_3> 0$.

\section{Central Limit Theorems for Additional Parameters}\label{sec:clt}

Let $M(z,x,u)$ denote the generating function of rooted planar maps, where the variable $z$ 
corresponds to the number of edges, $x$ to the number of vertices and $u$ to the root face valency.
Then by the usual combinatorial decomposition of maps we have
\[
M(z,x,u) = x + zu^2 M(z,x,u)^2 + zu \frac{M(z,x,1)- u M(z,x,u)}{1-u}.
\]
Thus, for every positive $x$ this is a catalytic equation of the form (\ref{eqBMJ})
so that Proposition~\ref{ProTh2} and Theorem~\ref{Th2} apply. In particular we obtain an expansion
and asympotics of the form
\begin{equation}\label{eqsingexpx}
M(z,x,1) = a_0(x) + a_2(x)\left( 1- \frac z{\rho(x)} \right) + a_3(x)\left( 1- \frac z{\rho(x)} \right)^{3/2} + \cdots,
\end{equation}
where $z=\rho(x)$ satisfies the equation
\begin{multline*}
768\,{x}^{4}{z}^{4}-1536\,{x}^{3}{z}^{4}-512\,{x}^{3}{z}^{3}+2304\,{x}
^{2}{z}^{4}+768\,{x}^{2}{z}^{3}-1536\,x{z}^{4}\\ +96\,{x}^{2}{z}^{2} +768
\,x{z}^{3}+768\,{z}^{4}-96\,x{z}^{2}-512\,{z}^{3}+96\,{z}^{2}-1 = 0
\end{multline*}
with $\rho(1) = \frac 1{12}$ and where
 $a_0(1) = \frac 43$, $a_2(1) = -\frac 43$, $a_3(1) = \frac 83$,
and consequently
\[
[z^n]\, M(z,x,1)  = c(x) n^{-5/2} \rho(x)^{-n} \left( 1 + O\left( \frac 1n \right) \right).
\]
Actually all the  functions $\rho(x)$, $c(x)$, and $a_j(x)$ are not only defined for positive $x$ but extend
to analytic functions around the positive real axis, and by inspection of the proof even the 
asymptotics can be extended to non-real $x$ that are close to the positive real axis.

Let $X_n$ denote the random variable equal to  the number of vertices in a random planar rooted map
with $n$ edges, where each map of size $n$ is considered to be equally likely. Then the probability
generating function $\mathbb{E}[x^{X_n}]$ can be written as
\[
\mathbb{E}[x^{X_n}] = \frac{ [z^n]\, M(z,x,1) }{ [z^n]\, M(z,1,1)} = \frac{c(x)}{c} \left( \frac{\rho(1)}{\rho(x)} \right)^n
\left( 1 + O\left( \frac 1n \right) \right).
\]
At this stage we can apply standard tools (see \cite[Chapter 2]{Drm-randomtrees}) 
to obtain a central limit theorem for $X_n$ of the form
$(X_n - \mu n)/{\sqrt{\sigma^2 n}} \to \mathcal{N}(0, 1)$, where
$\mu = - {\rho'(1)}/{\rho(1)}$ and $\sigma^2 = \mu + \mu^2 - {\rho''(1)}/{\rho(1)}$.
Since $\rho'(1) = - \frac 1{24}$ and $\rho''(1) =  \frac {19}{384}$ we immediately obtain
$\mu = \frac 12$ and $\sigma^2 = \frac {5}{32}$. We also have 
$\mathbb{E}[X_n]= \mu n + O(1)$ and $\mathbb{V}{\rm ar}[X_n]  = \sigma^2 n +  O(1)$.
In this special case Euler's relation and duality can be used to obtain (the even more precise representation)
$\mathbb{E}[X_n] = n/2 + 1$.\footnote{This central limit theorem seems to be a folklore result. However, to the
best of our knowledge it was first explicitly mentioned by the second author at the Alea-meeting 2010 in Luminy: 
{{\tt https://www-apr.lip6.fr/alea2010/}} .}

\medskip

Actually we can easily generalize Proposition~\ref{ProTh2} and Theorem~\ref{Th2} in order to 
obtain the following central limit theorem.

\begin{theo}\label{Th2-ext}
Suppose that $Q$ is a polynomial in $\alpha_0,\alpha_1,z,x,u$ with non-negative coefficients
that depends (at least) on $\alpha_1$, that is, $Q_{\alpha_1} \ne 0$, and
$F_0(x,u)$ is another polynomial with non-negative coefficients.
Let $M(z,x,u)$ be the power series solution of the equation
\begin{equation}\label{eqBMJx}
M(z,x,u) = F_0(x,u) + z Q\left( M(z,x,u), \frac{M(z,x,u)-M(z,x,0)}u, z,x,u \right).
\end{equation}
Furthermore  assume that $Q$ is not linear in $\alpha_0$ and $\alpha_1$, that is,
$Q_{\alpha_0\alpha_0} \ne 0$, or $Q_{\alpha_0\alpha_1} \ne 0$ or $Q_{\alpha_1\alpha_1} \ne 0$.
Additionally  assume that $Q_{\alpha_0u} \ne 0$, $\frac{\partial F_0}{\partial u}(x,0) = 0$, $Q_{\alpha_1}(0,0,1,0,0) \ne 0$
and that (according to Theorem~\ref{Th2})
$[z^n]\, M(z,1,0) > 0$ for $n \equiv a \bmod b$, whereas $[z^n]\, M(z,1,0) = 0$
for $n \not\equiv a \bmod b$.

Let $X_n$ with $n\equiv a \bmod b$ be a sequence of random variables defined by
\[
\mathbb{E}[x^{X_n}] = \frac{ [z^n]\, M(z,x,1) }{ [z^n]\, M(z,1,1)}.
\]
For positive $x$, let $\rho(x)>0$ denote the radius of convergence of $z\mapsto M(z,x,0)$.
Then $\rho(x)$ can be extended to an analytic function around the positive real axis and we have with
\[
\mu = - \frac{\rho'(1)}{\rho(1)}, \quad \sigma^2 = \mu + \mu^2 - \frac{\rho''(1)}{\rho(1)}
\]
the following asymptotic moment properties:
\[
\mathbb{E}[X_n]= \mu n + O(1) \quad\mbox{and}\quad \mathbb{V}{\rm ar}[X_n]  = \sigma^2 n +  O(1),
\]
for $n\equiv a \bmod b$. 
Furthermore, if $\sigma^2 \ne 0$ then we also have a central limit theorem of the form
\[
\frac{X_n - \mathbb{E}[X_n]}{\sqrt{ \mathbb{V}{\rm ar}[X_n] }} \to \mathcal{N}(0,1) \qquad (n\equiv a \bmod b).
\]
\end{theo}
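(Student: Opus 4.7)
The plan is to combine the structural result of Theorem~\ref{Th2} with the classical quasi-powers machinery for Gaussian limit laws, treating $x$ as an analytic parameter. For each fixed real $x$ close to $1$, I first check that all hypotheses of Theorem~\ref{Th2} survive: the conditions on $Q$ (non-linearity in $\alpha_0,\alpha_1$, $Q_{\alpha_1}\ne 0$, $Q_{\alpha_0u}\ne 0$) are independent of $x$; the vanishing $\partial F_0/\partial u(x,0)=0$ is given; and the analogue of $Q_{\alpha_1}(F_0(0),0,0,0)\ne 0$ is precisely the non-degeneracy assumed at $x=1$, which persists in a real neighborhood by continuity. Hence Theorem~\ref{Th2} yields a Puiseux expansion of $M(z,x,1)$ of the form (\ref{eqTh2.1}) with $a_3(x)>0$ and a dominant singularity $\rho(x)>0$.

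The decisive step is to promote this expansion to one whose data depend analytically on $x$ in a \emph{complex} neighborhood of $1$. The quadruple $(f_0,w_0,u_0,z_0)$ solves the extended positive system (\ref{eqnewsystem.3}); viewing this as four analytic equations in four unknowns depending on $x$, the Jacobian with respect to $(f_0,w_0,u_0,z_0)$ is non-singular at $x=1$. This is essentially the content of the calculation at the end of Section~\ref{pth2}: the functional determinant $\Delta$ in (\ref{eqfuncdet}) was shown to be non-zero under exactly the hypotheses we have here, and this non-vanishing is equivalent to invertibility of the Jacobian of (\ref{eqnewsystem.3}). The implicit function theorem then yields analytic continuations $\rho(x), f_0(x), w_0(x), u_0(x)$; the coefficients $a_j(x)$ of the Puiseux expansion inherit the same analytic dependence, and the period $b$ and residue class $a$ are determined by the supports of $F_0$ and $Q$ and are unchanged by specialising $x$.

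Uniform singularity analysis (Flajolet--Sedgewick transfer theorems applied with $x$ as a parameter) then produces
\[
[z^n]\,M(z,x,1) \;=\; c(x)\,\rho(x)^{-n}\,n^{-5/2}\bigl(1+O(1/n)\bigr),\qquad n\equiv a\bmod b,
\]
uniformly for $x$ in a complex neighborhood of $1$, with $c(x)$ analytic and $c(1)>0$. Dividing by the $x=1$ instance gives the quasi-powers representation
\[
\E[x^{X_n}] \;=\; \frac{c(x)}{c(1)}\left(\frac{\rho(1)}{\rho(x)}\right)^{\!n}\bigl(1+O(1/n)\bigr),
\]
from which the asymptotic mean and variance formulas follow by differentiating $\log\E[e^{tX_n}]$ twice at $t=0$, and the central limit theorem follows from Hwang's quasi-powers theorem (equivalently, the framework of \cite{Drm-randomtrees}, Chapter~2) provided $\sigma^2\ne 0$.

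I expect the main technical obstacle to be the second step: carefully confirming that the non-vanishing of $\Delta$ established in the proof of Theorem~\ref{Th2} truly corresponds to invertibility of the Jacobian of the full extended system (\ref{eqnewsystem.3}), so that the implicit function theorem delivers a genuine complex-analytic family $x\mapsto\rho(x)$ and not merely a real-analytic family on the positive axis. Once this is settled, the transfer theorem and the quasi-powers theorem are off-the-shelf tools, and the derivation of $\mu$ and $\sigma^2$ in terms of $\rho',\rho''$ is the standard computation.
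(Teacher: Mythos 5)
Your overall strategy matches the paper's at a high level: apply Theorem~\ref{Th2} pointwise in $x$, promote the Puiseux expansion to an analytic family in $x$, then invoke uniform transfer and the quasi-powers theorem. The last two steps are indeed off-the-shelf and you handle them correctly.

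The genuine gap is in the middle step, and you already flagged it: you propose to obtain analyticity of $\rho(x)$ (and the coefficient functions) by applying the implicit function theorem to the $4\times 4$ extended system (\ref{eqnewsystem.3}) and claim its Jacobian non-singularity is ``essentially the content'' of the determinant $\Delta\ne 0$ from (\ref{eqfuncdet}). That identification is not correct as stated. The determinant $\Delta$ is a $3\times 3$ minor of derivatives of $P$ used to show $y_3\ne 0$; it is the Jacobian of the system $P=0,P_{x_0}=0,P_u=0$ with respect to the variables $(x_0,x_1,z)$ at fixed $u$, not the Jacobian of (\ref{eqnewsystem.3}) with respect to $(f_0,w_0,u_0,z_0)$ at fixed $x$. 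These are genuinely different objects, and turning your sketch into a proof would require verifying their (non-obvious) equivalence by an explicit change-of-variables computation.

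The paper sidesteps this entirely. Instead of attacking (\ref{eqnewsystem.3}) directly, it uses the general machinery of positive strongly connected polynomial systems: the three-dimensional system (\ref{eqnewsystem.2}), with $x$ as an analytic parameter, reduces to a single positive equation $u=F(x,z,u)$ with $F_z\ne 0$ and $F_{uu}\ne 0$. The characteristic system $u=F(x,z,u),\ 1=F_u(x,z,u)$ then has Jacobian $F_zF_{uu}\ne 0$ with respect to $(z,u)$, which is non-zero \emph{automatically} from positivity and non-linearity; no separate determinant calculation is needed, and the IFT immediately yields analytic $\rho(x)$ and $u_0(x)$ around the entire positive $x$-axis, not merely near $x=1$. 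This is the buy of the paper's route: it avoids the delicate determinant comparison you identified as the main obstacle and gives a slightly stronger conclusion. Your route would be salvageable, but it needs the missing computation; as written, the key claim is an assertion rather than a proof.
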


\begin{proof}
It is easy to show that, for every positive $x$, we can apply Proposition~\ref{ProTh2} and Theorem~\ref{Th2}
and obtain (for $n\equiv a \bmod b$)
\begin{equation}\label{eqMzx1coeff}
[z^n]\, M(z,x,1)  = c(x)\, n^{-5/2} \rho(x)^{-n} \left( 1 + O\left( \frac 1n \right) \right)
\end{equation}
for some positive valued function $c(x)$. Note that the error term comes from the
remainder terms 
\[
a_4(x) \left( 1- \frac z{\rho(x)} \right)^2 + O\left(\left( 1- \frac z{\rho(x)} \right)^{5/2} \right)
\]
in the singular expansion (\ref{eqsingexpx}) of $M(z,x,1)$. The term 
$a_4(x) \left( 1- z/{\rho(x)} \right)^2$ has no asymptotic contribution,
wheres the other term gives rise to the error term $O(n^{-7/2} \rho(x)^{-n})$; see also 
\cite{Drm-randomtrees}. This proves (\ref{eqMzx1coeff}). Furthermore, since
$F_0$ and $Q$ are polynomials it follows that $\rho(x)$ is an algebraic function since
it is determined by the algebraic system of equations (\ref{eqnewsystem.3}), where we just
have to add the algebraic dependence on $x$. 

Actually it can be shown that $\rho(x)$ has no
singular point for $x> 0$. As explained in the proof of Proposition~\ref{ProTh2}, we can reduce
the solution of the catalytic equation to a system of three positive polynomial equations.
Such a system can be reduced to a single equation 
$u(x,z) = F(x,z,u(x,u))$ in one unknown function $u=u(x,z)$, where $F = F(x,z,u)$ has a power series
expansion with non-negative coefficients (see \cite{Drm-randomtrees}). Note that we certainly have 
$F_z \ne  0$ and $F_{uu} \ne 0$.
The system of equations that determines the values $z = \rho(x)$ and $u = u(x,\rho(x))$, where the solution 
function $z\mapsto u(x,u)$ gets singular, is given
by 
\[
u = F(x,z,u), \quad   1 = F_u(x,z,u).
\]
The functional determinant of this system, when we solve it for $z = \rho(x)$ and $u = u(x,\rho(x))$, 
is given by
\[
F_z F_{uu} - (F_u-1) F_{uz} = F_z F_{uu} \ne 0.
\]
By the implicit function theorem  $z = \rho(x)$, as well as  $u = u(x,\rho(x))$  are analytic.
Moreover $\rho'(x) = - F_x/F_z < 0$, since $F_x > 0$ and $F_z > 0$. 

By the methods of \cite{Drm-randomtrees} it also follows that the singular expansion  
(\ref{eqsingrep-3}), where $Z$ has to replaced by $\sqrt{1 - z/\rho(x)}$ and all coefficient
functions $f_j$, $u_j$, $w_j$ depend on $x$, can be extended to complex $x$ that are sufficiently
close to the positive real axis. Accordingly the asymptotic expansion (\ref{eqMzx1coeff}) holds
uniformly if $x$ varies in a compact subset of the complex plane, where $\rho(x)$ is well defined.

As mentioned above this is sufficient to prove the asymptotic expansion for 
$\mathbb{E}[X_n]$, $\mathbb{V}{\rm ar}[X_n]$, as well the central limit theorem.
\end{proof}

We note the crucial point in the proof of Theorem~\ref{Th2-ext} was to prove 
a singular expansion of the form (\ref{eqsingexpx}) that holds in a complex
neighborhood of $x=1$. We finally add a theorem for catalytic equations, where 
we do not necessarily have a polynomial equation with non-negative 
coefficients. Again, we assume that there is an additional variable $x$, 
where we non necessarily assume that the defining catalytic equations 
contains only non-negative coefficients. This kind of approach was first applied
in \cite{DP13}, where the number of faces of given valency in random planar maps
was discussed; see below. (It was first stated without a proof in \cite{DYu}).  

\begin{theo}\label{Th2-ext-2}
Suppose that $M(z,x,u)$ and $M_1(z,x)$ are the solutions of 
the catalytic equation $P(M(z,x,u),M_1(z,x),z,x,u) = 0$, 
where the function $P(x_0,x_1,z,x,u)$ is analytic and $M_1(z,{1})$ has
a singularity at $z= z_0$ of form 
\begin{equation}\label{eqM1sing}
M_1(z,1) = y_0 + y_2\left( 1 - \frac z{z_0} \right)+ y_3\left( 1 - \frac z{z_0} \right)^{3/2} + \cdots,
\end{equation}
with $y_3\ne 0$ 
such that for  $x_0 = M(z_0,1,u_0)$, $x_1 = M_1(z_0,{1})$, $z = z_0$, $x=1$, and $u=u_0$
we have 
\begin{equation}\label{eqTh2-ext-2}
P = 0, \quad P_u = 0, \quad P_{x_0} = 0, \quad P_{x_1} \ne 0, \quad 
P_{x_0x_0} P_{uu} = P_{x_0u}^2.
\end{equation}
Furthermore, let $z = \rho(x)$, $u = u_0(x)$, 
$x_0 = x_0(x)$, $x_1 = x_1(x)$ 
for ${x}$ close to ${1}$ 
be defined by $\rho({1}) = z_0$, $u_0({1}) = u_0$, 
$x_0({1}) = M(z_0,1,u_0)$, $x_1({1})= M_1(z_0,{ 1})$ and
by the system
\begin{equation}\label{eqTh2-ext-2-2}
P = 0, \quad P_u = 0, \quad P_{x_0} = 0, \quad 
P_{x_0x_0} P_{uu} = P_{x_0u}^2.
\end{equation}

Then for ${x}$ close to ${1}$ the function $M_1(z,x)$
has a local singular representation of the form
\begin{equation}\label{eqM1sing2}
M_1(z,x) = a_0(x) + a_2(x)\left( 1 - \frac z{\rho(x)} \right)+ a_3(x)\left( 1 - \frac z{\rho(x)} \right)^{3/2} + \cdots
\end{equation}
where the functions $a_j(x)$ are analytic at $x=1$ and satisfy $a_j(1) = y_j$.
\end{theo}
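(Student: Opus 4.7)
The plan is to lift the proof of Theorem~\ref{Th2} from the single variable $z$ to the two parameters $(z,x)$, treating $x$ throughout as a passive analytic parameter, and using the hypothesis $y_3\ne 0$ as a substitute for the positivity and strong connectedness that was available in the polynomial setting of Theorem~\ref{Th2}.

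First I would apply the Weierstrass preparation theorem at the critical point $(M(z_0,1,u_0),M_1(z_0,1),z_0,1,u_0)$ to write, in a complex neighborhood,
\[
P(x_0,x_1,z,x,u) = K(x_0,x_1,z,x,u)\bigl((x_0-G(x_1,z,x,u))^2 - H(x_1,z,x,u)\bigr),
\]
with $K$, $G$, $H$ analytic and $K$ non-vanishing. This uses $P_{x_0x_0}\ne 0$ at the critical point, which is forced by the singular relation $P_{x_0x_0}P_{uu}=P_{x_0u}^2$ together with $y_3\ne 0$: otherwise one would land in a higher-order degenerate regime incompatible with a pure $3/2$ Puiseux expansion. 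Exactly as in Section~\ref{pth2}, the identities $P=0$ and $P_u=0$ translate along the solution branch into $H=0$ and $H_u=0$, while $P_{x_0}=0$ together with $P_{x_0x_0}P_{uu}=P_{x_0u}^2$ yield $H_u = H_{uu}=0$ at the critical point, and $H_{x_1}\ne 0$ follows from $P_{x_1}\ne 0$.

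The crucial next step is to identify the hypothesis $y_3\ne 0$ with the Morse-type non-degeneracy $H_{uuu}\ne 0$, via the formula
\[
y_3 = \frac{2u_1 z_0}{3 H_{x_1}^2}\bigl(H_{x_1}H_{zu} - H_z H_{x_1u}\bigr)
\]
together with the relation $H_{uuu}u_1^2 = \tfrac{2z_0}{H_{x_1}}(H_{x_1}H_{zu}-H_zH_{x_1u})$ obtained by expanding $H_u(M_1(z,1),z,1,u(z,1))=0$; both are derived verbatim as in Section~\ref{pth2}. The Jacobian of the system $H=0$, $H_u=0$, $H_{uu}=0$ with respect to $(x_1,z,u)$ at the critical point then simplifies to $H_{uuu}\bigl(H_{x_1}H_{zu}-H_z H_{x_1u}\bigr)\ne 0$, so the implicit function theorem furnishes analytic functions $a_0(x)$, $\rho(x)$, $u_0(x)$ near $x=1$ solving the first three equations of \eqref{eqTh2-ext-2-2}, and $x_0(x) = G(a_0(x),\rho(x),x,u_0(x))$ is then automatic.

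Finally, for each such $x$ I would rerun the coefficient-matching of Section~\ref{pth2} with $x$ as a parameter: expanding $H(M_1(z,x),z,x,u(z,x))=0$ in $Z=\sqrt{1-z/\rho(x)}$ at $(a_0(x),\rho(x),x,u_0(x))$ yields $M_1(z,x) = a_0(x) + a_2(x)Z^2 + a_3(x)Z^3 + \cdots$ with
\[
a_2(x) = \frac{\rho(x)H_z}{H_{x_1}},\qquad a_3(x) = \frac{2 u_1(x)\rho(x)}{3H_{x_1}^2}\bigl(H_{x_1}H_{zu}-H_zH_{x_1u}\bigr),
\]
all evaluated at $(a_0(x),\rho(x),x,u_0(x))$; these are manifestly analytic in $x$ with $a_j(1)=y_j$ by construction. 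The main obstacle is the identification in step three: unlike in Theorem~\ref{Th2}, one cannot rely on positive strongly connected systems to guarantee the third-order non-degeneracy, and everything must be extracted from the single abstract assumption $y_3\ne 0$. Once the equivalence $y_3\ne 0 \Leftrightarrow H_{uuu}\ne 0$ is in place, the rest of the argument is routine Weierstrass and implicit-function-theorem bookkeeping.
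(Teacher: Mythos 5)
Your plan follows the paper's own proof almost step for step: Weierstrass preparation to write $P = K\bigl((x_0-G)^2-H\bigr)$, translation of the singular conditions into $H = H_u = H_{uu} = 0$ and $H_{x_1}\ne 0$, deduction of $u_1\ne 0$ and $H_{uuu}\ne 0$ from $y_3\ne 0$ via the formulas from Section~\ref{pth2}, and then the implicit function theorem applied to the system $H=H_u=H_{uu}=0$ in the unknowns $(x_1,z,u)$ with Jacobian $H_{uuu}\bigl(H_{x_1}H_{zu}-H_zH_{x_1u}\bigr)\ne 0$. That part is essentially identical to the paper.

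There is, however, one genuine gap at the last step. You write that ``rerunning the coefficient-matching'' yields the expansion $M_1(z,x)=a_0(x)+a_2(x)Z^2+a_3(x)Z^3+\cdots$, but coefficient-matching only computes the coefficients of a Puiseux expansion once you already know such an expansion exists; it does not establish existence. What has to be shown is that for each fixed $x$ near $1$ the system $H(y,z,x,u)=0$, $H_u(y,z,x,u)=0$ has precisely two local solution branches $(y(z,x),u(z,x))$ with a square-root branch point at $z=\rho(x)$ of the specific form needed, and that $M_1(z,x)$ is one of them. In the positive strongly connected polynomial setting of Theorem~\ref{Th2} this came for free from the general theory; here it does not, and the paper fills the gap by invoking (uniformly in a complex neighbourhood of $x=1$) the quadratic-method lemma from~\cite[Lemma~2]{DN11}, whose hypotheses are exactly the non-degeneracy conditions $H_{x_1}\ne 0$, $H_{uuu}\ne 0$ you have established. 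You have all the hypotheses in hand, but you still need to cite or reprove that lemma to convert them into the existence of the Puiseux expansion; ``rerunning the coefficient-matching'' alone does not do it. A secondary, smaller issue: your remark that $P_{x_0x_0}\ne 0$ is ``forced'' by the singular relation together with $y_3\ne 0$ is an informal heuristic, not an argument; in practice one simply assumes (as the paper does tacitly) that $P$ has order two in $x_0$ at the critical point so that the Weierstrass factorization takes the quadratic form.
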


\begin{proof}
As in the proof of Theorem~\ref{Th2} we can replace the (catalytic) equation
$P(M(z,x,u),M_1(z,x),z,x,u) = 0$ by 
\[
\left( M(z,x,u)  - G(M_1(z,x),z,x,u)  \right)^2 = H(M_1(z,x),z,x,u)
\]
around $z=z_0$, $x=1$, $u=u_0$. In particular we have
\[
H = 0, \quad H_u = 0, \quad H_{uu} = 0, \quad H_{x_1} \ne 0
\]
for $x_0 = M(z_0,1,u_0)$, $x_1 = M_1(z_0,{1})$, $z = z_0$, $x=1$, and $u=u_0$.

In the next step we set $x=1$ and apply the methods from \cite[Lemma 2]{DN11} that ensure
that there exist precisely two (local) solutions $u(z)$ and $y(z)$ of the system of equations 
\[
H(y(z),z,1,u(z)), \quad H_u(y(z),z,1,u(z))
\]
with $y(z_0) = M_1(z_0,{1})$ and $u(z_0) = u_0$ and with local expansions 
\[
u(z) = u_0 \pm u_1 Z + u_2 Z^2 \pm u_3 Z^3 + \cdots, \qquad
y(z) = y_0 + y_2 Z^2 \pm y_3 Z^3 + \cdots,
\]
where $Z = \sqrt{1-z/z_0}$ (and the signs are either all positive or all negative).
By assumption, one of these two solutions has to be equal to
$M_1(z,1)$ which implies that $y_3 \ne 0$.  

By the methods of the proof of Theorem~\ref{Th2} we also have
\[
y_3 = \frac{2u_1z_0}{3H_{x_1}^2} 
\left( H_{x_1}H_{zu} -  H_zH_{x_1u}\right).
\]
Recall that $H_{x_1} \ne 0$. Thus it follows that $z_0\ne 0$, $u_1\ne 0$, and
$H_{x_1}H_{zu} -  H_zH_{x_1u} \ne 0$. Furthermore, since $y_2 = z_0 H_z/H_{x_1}$ 
it follows from the relation (\ref{u1}) that
\[
H_{uuu} = \frac{2 z_0}{u_1^2} \left( H_{x_1}H_{zu} -  H_zH_{x_1u}\right) \ne 0.
\]

We want to do a similar analysis for $x$ close to $1$. For this purpose we
have to check whether the conditions (\ref{eqTh2-ext-2}) can be extended to 
$x$ different from $1$, that is, whether is is possible to solve the system
(\ref{eqTh2-ext-2-2}) for $z = \rho(x)$, $u = u_0(x)$, 
$x_0 = x_0(x)$, $x_1 = x_1(x)$ (if $x$ is close to $1$). Note that the 
condition $P_{x_1} \ne 0$ certainly extends to a neighborhood.
By the same procedure as in the proof of Theorem~\ref{Th2} it follows that
the system (\ref{eqTh2-ext-2-2}) is equivalent to the system
\begin{equation}\label{eqHHH}
H = 0, \quad H_u = 0, \quad H_{uu} = 0
\end{equation}
for $x_1 = x_1(x)$, $z = \rho(x)$, $u = u_0(x)$, Note that $x_1(x)$, $\rho(x)$, $u_0(x)$
are the same functions as above; and the function $x_0(x)$ can be recovered by
$x_0(x) = M(\rho(x),x,u_0(x))$. Now the functional determinant of the system
(\ref{eqHHH}) is given by
\[
\left|    
\begin{array}{ccc}
H_{x_1} & H_{ux_1} & H_{uux_1} \\
H_{z} & H_{uz} & H_{uuz} \\
H_u & H_{uu} & H_{uuu} 
\end{array}
\right| = H_{uuu} \left( H_{x_1}H_{uz}-H_z H_{ux_1} \right)
\]
which is non-zero at $x_1 = M_1(z_0,{1})$, $z = z_0$, $x=1$, and $u=u_0$.
Hence by the implicit function theorem the system (\ref{eqHHH}) has an
analytic (and unique) local solution $x_1 = x_1(x)$, $z = \rho(x)$, $u = u_0(x)$
with $x_1(1) = M_1(z_0,{1})$, $\rho(1) = z_0$, $u_0(1) = u_0$.

Summing up, we can apply the same techniques as in \cite[Lemma 2]{DN11} 
that are now valid uniformly in a small (complex) neighborhood of $x=1$ 
and leads to an expansion of the form (\ref{eqM1sing2}). 
\end{proof}

Expansions of the form (\ref{eqM1sing}) or (\ref{eqM1sing2}), respectively, are 
in particular useful if $z=z_0$ or $z=\rho(x)$ is the only singularity on the
slit disc 
\[
\{ z \in \mathbb{C}: |z|< z_0 + \varepsilon \} \setminus [z_0,\infty) \quad \mbox{or}\quad
\{ z \in \mathbb{C}: |z|< |\rho(x)| + \varepsilon \} \setminus [z_0,\infty)
\]
for some $\varepsilon > 0$. In this case it follows directly that, as $n\to\infty$,
\[
[z^n]\, M_1(z,1) = \frac{3 y_3}{4 \sqrt\pi} z_0^{-n} n^{-5/2} \left( 1 + O\left( \frac 1n \right) \right)
\]
or 
\[
[z^n]\, M_1(z,x) = \frac{3 a_3(x)}{4 \sqrt\pi} \rho(x)^{-n} n^{-5/2} \left( 1 + O\left( \frac 1n \right) \right).
\]
Thus, if 
\[
M_1(z,x) = \sum_{n\ge 0} \mathbb{E}[x^{X_n}]\cdot [z^n]\, M_1(z,1) \cdot z^n
\]
encodes the distribution of a sequence of random variables $X_n$ it follows that
\[
\mathbb{E}[x^{X_n}] = \frac{ [z^n]\, M_1(z,x)}{ [z^n]\, M_1(z,1) } = \frac{a_3(x)}{y_3} \left( \frac {z_0}{\rho(x)} \right)^n
\left( 1 + O\left( \frac 1n \right) \right).
\]
and we obtain a central limit theorem by standard tools (see \cite{Drm-randomtrees} and the above discussion).

\begin{ex}
Let $k \ge 2$ be a fixed integer and let $M(z, x, u)$ be the ordinary
generating function enumerating rooted planar maps, where $z$ corresponds to the 
number of edges, $x$ to the number of non-root faces of degree $k$, 
and $u$ to the root-face degree. In \cite[Lemma 2]{DP13} is was shown that 
$M(z,x,u)$ satisfies the equation
\begin{align*}
M(z,x,u) & \left( 1- z(u-1)u^{-k+2} \right) 
= 1 + zu^2  M(z,x,u) \\
&+ zu \frac{uM(z,x,u) - M(z,x,1)}{u-1} \\
& - z(x-1)u^{-k+2} G(z,x,M(z,x,1),u),
\end{align*}
where $G(z,x,y,u)$ is a polynomial of degree $k-2$ in $u$ with coefficients that
are analytic functions in $(z, x, y)$ for $|z| \le 1/10$,  $|x - 1| \le 2^{1-k}$,
and $|y| \le 2$. It should be noted that the function $G$ is not explicitly given
but is (one of) the solution(s) of in infinite system of equations that can be
solved with the help of Banach's fixed point theorem.

Clearly, $M(z,1,u)$ is just the usual planar map counting generating function for which 
we know that $M(z,1,1)$ is explicitly given by (\ref{eqMz1}) so that all assumptions
of Theorem~\ref{Th2-ext-2} are satisfied. Alternatively we could have used
Theorem~\ref{Th2} to obtian the local expansion of $M(z,1,1)$. 
Furthermore a central limit theorem follows, 
where $X_n$ is just the number of non-root faces of valency $k$ in a random
planar map with $n$ edges. (This is also one of the main results of \cite{DP13}.)
\end{ex}

\begin{ex}
We say that a face is a pure $k$-gon ($k \ge 2$) if it is incident exactly to $k$
different edges and $k$ different vertices. 
let $P(z, x, u)$ be the ordinary
generating function enumerating rooted planar maps, where $z$ corresponds to the 
number of edges, $x$ to the number of non-root faces that are pute $k$-gons, 
and $u$ to the root-face degree. Similarly to the previous case it can be shown (see \cite{Yu}) that
$P(z,x,u)$ satisfies an equation of the form
\begin{align*}
P(z,x,u) & = 1 + zu^2  P(z,x,u) + zu \frac{uP(z,x,u) - P(z,x,1)}{u-1} \\
& - z(x-1)u^{-k+2} \tilde G(z,x,P(z,x,1),u),
\end{align*}
where $\tilde G(z,x,y,u)$ is a polynomial of degree $k-2$ in $u$ with coefficients that
are analytic functions in $(z, x, y)$ for $|z| \le 1/10$,  $|x - 1| \le 2^{1-k}$,
and $|y| \le 2$.

Again, it we set $x=1$ we recover $M(z,u) = P(z,1,u)$ so that all assumptions of
Theorem~\ref{Th2-ext-2} are satisfied. Hence, for fixed $k\ge 2$, the number of
pure $k$-gons in a random planar map satisfies a central limit theorem.
\end{ex}

\begin{ex}
A planar map is simple if is has no loops and no multiple edges. 
The corresponding generating function $S(z,u)$ (where $z$ corresponds to the number of edges
and $u$ to the root face valency) satisfies the catalytic equation (see \cite{Yu})
\begin{align*}
S(z,u) &= 1 + zu^2 S(z,u)^2 + zu \frac{uS(z,u) - S(z,1)}{u-1} \\
&- zuS(z,u)S(z,1) - (S(z,u)-1)(S(z,1)-1)
\end{align*}
and the solution $S(z,1)$ is explicitly given by
\[
S(z,1) = \frac{1+ 20 z - 8 z^2 + (1-8z)^{3/2}}{2(z+1)^3}.
\]
Let $k \ge 2$ be a fixed integer and let $S(z, x, u)$ be the ordinary
generating function enumerating simple rooted planar maps, where $z$ corresponds to the 
number of edges, $x$ to the number of non-root faces of degree $k$, 
and $u$ to the root-face degree. In \cite{Yu} is was shown that 
$S(z,x,u)$ satisfies the equation
\begin{align*}
S(z,x,u) &  
= 1 + zu^2  S(z,x,u) + zu \frac{uS(z,x,u) - S(z,x,1)}{u-1} \\
&- zuS(z,x,u)S(z,x,1) - (S(z,x,u)-1)(S(z,x,1)-1) \\
& +(x-1)\Biggl( zu^{-k+2} S(z,x,u)G_1(z,x,S(z,x,1),u) \\
& \qquad \qquad \qquad  - zu S(z,x,u)G_2(z,x,S(z,x,1)) \\
& \qquad \qquad \qquad  - (S(z,x,u)-1)G_3(z,x,S(z,x,1)) \Biggr), 
\end{align*}
where $G_1(z,x,y,u)$ is a polynomial of degree $k-2$ in $u$ with coefficients that
are analytic functions in $(z, x, y)$ for $|z| \le 2/25$,  $|x - 1| \le 2^{-k-5}$,
and $|y-1| \le 2/5$. Similarly the functions $G_2(z,x,y)$ and $G_3(z,x,y)$ are
analytic functions in $(z, x, y)$ for $|z| \le 2/25$,  $|x - 1| \le 2^{-k-5}$,
and $|y-1| \le 2/5$.

Again all assumptions of Theorem~\ref{Th2-ext-2} are satisfied. Hence, for fixed $k\ge 2$, the number of
faces of valency $k$ in a random simple planar map satisfies a central limit theorem.
\end{ex}

\end{document}